\newcommand{\mylabel}[2]{#2\def\@currentlabel{#2}\label{#1}}
\mathchardef\mhyphen="2D 
\newtheorem*{rep@theorem}{\rep@title}
\newcommand{\newreptheorem}[2]{%
	\newenvironment{rep#1}[1]{%
		\def\rep@title{#2 \ref{##1}}%
		\begin{rep@theorem}}%
		{\end{rep@theorem}}}
\tikzset{>=stealth'}
\tikzset{
	symbol/.style={
		draw=none,
		every to/.append style={
			edge node={node [sloped, allow upside down, auto=false]{$#1$}}}
}}
\def\arrowLengthDisplayStyle{4ex}
\def\arrowHeightDisplayStyle{.8ex}
\def\arrowSkipDisplayStyle{.5ex}
\def\arrowLengthTextStyle{3ex}
\def\arrowHeightTextStyle{.8ex}
\def\arrowSkipTextStyle{.4ex}
\def\arrowLengthScriptStyle{2.5ex}
\def\arrowHeightScriptStyle{.6ex}
\def\arrowSkipScriptStyle{.3ex}
\def\arrowLengthScriptScriptStyle{2ex}
\def\arrowHeightScriptScriptStyle{.4ex}
\def\arrowSkipScriptScriptStyle{.2ex}
\renewcommand{\to}{\arrow{->}}
\newcommand{\mono}{\arrow{>->}}
\newcommand{\epi}{\arrow{->>}}
\newcommand{\embed}{\arrow{right hook->}}
\newcommand{\MakeTikzArrowWithSuperscriptSubscript}[4]
{
	\mathchoice
	{ 
		\hspace*{\arrowSkipDisplayStyle}
		\begin{tikzpicture}[baseline]
		\draw [#1] (0,\arrowHeightDisplayStyle) -- node [above] {$#2$} node [below] {$#3$} (#4 * \arrowLengthDisplayStyle, \arrowHeightDisplayStyle);
		\end{tikzpicture}
		\hspace*{\arrowSkipDisplayStyle}
	}
	{ 
		\hspace*{\arrowSkipTextStyle}
		\begin{tikzpicture}[baseline]
		\draw [#1] (0,\arrowHeightTextStyle) -- node [above] {$\scriptstyle #2$} node [below] {$\scriptstyle #3$} (#4 * \arrowLengthTextStyle, \arrowHeightTextStyle);
		\end{tikzpicture}
		\hspace*{\arrowSkipTextStyle}
	}
	{ 
		\hspace*{\arrowSkipScriptStyle}
		\begin{tikzpicture}[baseline]
		\draw [#1] (0,\arrowHeightScriptStyle) -- node [above] {$\scriptscriptstyle #2$} node [below] {$\scriptscriptstyle #3$} (#4 * \arrowLengthScriptStyle, \arrowHeightScriptStyle);
		\end{tikzpicture}
		\hspace*{\arrowSkipScriptStyle}
	}
	{ 
		\hspace*{\arrowSkipScriptScriptStyle}
		\begin{tikzpicture}[baseline]
		\draw [#1] (0,\arrowHeightScriptScriptStyle) -- node [above] {$\scriptscriptstyle #2$} node [below] {$\scriptscriptstyle #3$} (#4 * \arrowLengthScriptScriptStyle, \arrowHeightScriptScriptStyle);
		\end{tikzpicture}
		\hspace*{\arrowSkipScriptScriptStyle}
	}
}
\newcommand{\MakeTikzArrowWithCentralLabel}[3]
{
	\mathchoice
	{ 
		\hspace*{\arrowSkipDisplayStyle}
		\begin{tikzpicture}[baseline]
		\draw [#1] (0,\arrowHeightDisplayStyle) -- node [fill=white,inner sep=1pt] {$#2$} (#3 * \arrowLengthDisplayStyle, \arrowHeightDisplayStyle);
		\end{tikzpicture}
		\hspace*{\arrowSkipDisplayStyle}
	}
	{ 
		\hspace*{\arrowSkipTextStyle}
		\begin{tikzpicture}[baseline]
		\draw [#1] (0,\arrowHeightTextStyle) -- node [fill=white,inner sep=1pt] {$\scriptstyle #2$} (#3 * \arrowLengthTextStyle, \arrowHeightTextStyle);
		\end{tikzpicture}
		\hspace*{\arrowSkipTextStyle}
	}
	{ 
		\hspace*{\arrowSkipScriptStyle}
		\begin{tikzpicture}[baseline]
		\draw [#1] (0,\arrowHeightScriptStyle) -- node [fill=white,inner sep=1pt] {$\scriptscriptstyle #2$} (#3 * \arrowLengthScriptStyle, \arrowHeightScriptStyle);
		\end{tikzpicture}
		\hspace*{\arrowSkipScriptStyle}
	}
	{ 
		\hspace*{\arrowSkipScriptScriptStyle}
		\begin{tikzpicture}[baseline]
		\draw [#1] (0,\arrowHeightScriptScriptStyle) -- node [fill=white,inner sep=1pt] {$\scriptscriptstyle #2$} (#3 * \arrowLengthScriptScriptStyle, \arrowHeightScriptScriptStyle);
		\end{tikzpicture}
		\hspace*{\arrowSkipScriptScriptStyle}
	}
}
\def\arrow#1{\def\lastArrowStyle{#1}
	\futurelet\testchar\arrowMaybeStreched}
\def\arrowMaybeStreched{\ifx[\testchar \let\next\arrowStreched
	\else \let\next\arrowUnstreched \fi
	\next}
\def\arrowStreched[#1]{\def\lastArrowStrech{#1}
	\futurelet\testchar\arrowMaybeLabel}
\def\arrowUnstreched{\def\lastArrowStrech{1}
	\futurelet\testchar\arrowMaybeLabel}
\def\arrowMaybeLabel{\ifx^\testchar \let\next\arrowSuperscript
	\else \ifx_\testchar \let\next\arrowSubscript
	\else \ifx~\testchar \let\next\arrowCentralLabel
	\else \let\next\arrowNoLabel
	\fi
	\fi
	\fi
	\next}
\def\arrowSuperscript^#1{\def\lastArrowSuperscript{#1}
	\futurelet\testchar\arrowSuperMaybeSub}
\def\arrowSuperMaybeSub{\ifx_\testchar \let\next\arrowSuperscriptSubscript
	\else \let\next\arrowSuperscriptNoSubscript \fi
	\next}
\def\arrowSubscript_#1{\def\lastArrowSubscript{#1}
	\futurelet\testchar\arrowSubMaybeSuper}
\def\arrowSubMaybeSuper{\ifx^\testchar \let\next\arrowSubscriptSuperscript
	\else \let\next\arrowSubscriptNoSuperscript \fi
	\next}
\def\arrowSuperscriptSubscript_#1{\def\lastArrowSubscript{#1}
	\arrowDrawSupSub}
\def\arrowSuperscriptNoSubscript{\def\lastArrowSubscript{}
	\arrowDrawSupSub}
\def\arrowSubscriptSuperscript^#1{\def\lastArrowSuperscript{#1}
	\arrowDrawSupSub}
\def\arrowSubscriptNoSuperscript{\def\lastArrowSuperscript{}
	\arrowDrawSupSub}
\def\arrowNoLabel{\def\lastArrowSuperscript{}
	\def\lastArrowSubscript{}
	\arrowDrawSupSub}
\def\arrowCentralLabel~#1{\MakeTikzArrowWithCentralLabel{\lastArrowStyle}{#1}{\lastArrowStrech}}
\def\arrowDrawSupSub{\MakeTikzArrowWithSuperscriptSubscript{\lastArrowStyle}{\lastArrowSuperscript}{\lastArrowSubscript}{\lastArrowStrech}}
\renewcommand*\env@matrix[1][\arraystretch]{%
	\edef\arraystretch{#1}%
	\hskip -\arraycolsep
	\let\@ifnextchar\new@ifnextchar
	\array{*\c@MaxMatrixCols c}}
\renewcommand{\hat}{\widehat}
\newcommand{\A}{\mathscr{A}}
\newcommand{\B}{\mathscr{B}}
\newcommand{\C}{\mathscr{C}}
\newcommand{\D}{\mathscr{D}}
\renewcommand{\c}{\mathcal{C}}
\newcommand{\cof}{_{\dag}}
\newcommand{\fib}{^{\dag}}
\newcommand{\op}{^{\mathrm{op}}}
\newcommand{\Z}{\mathbb{Z}}
\DeclareMathOperator{\Ab}{Ab}
\DeclareMathOperator{\sSet}{sSet}
\DeclareMathOperator{\Ex}{Ex}
\DeclareMathOperator{\map}{map}
\DeclareMathOperator{\Ext}{Ext}
\DeclareMathOperator{\Extcat}{\mathscr{E}\!\mathit{xt}}
\DeclareMathOperator{\kan}{\mathcal{S}}
\DeclareMathOperator{\Sp}{\mathrm{Sp}}
\DeclareMathOperator{\homcat}{h\!}
\DeclareMathOperator{\seq}{\mathbb{S}}
\DeclareMathOperator{\Hst}{\mathcal{H}^{\mathrm{st}}}
\DeclareMathOperator{\cofibre}{cof}
\DeclareMathOperator{\Mod}{Mod}
\DeclareMathOperator{\classExtcat}{\mathcal{E}xt}
\newtheorem{theorem}{Theorem}[section]
\newtheorem{lemma}[theorem]{Lemma}
\newtheorem{corollary}[theorem]{Corollary}
\newtheorem{proposition}[theorem]{Proposition}
\theoremstyle{definition}
\newtheorem*{acknowledgements}{Acknowledgements}
\newtheorem{definition}[theorem]{Definition}
\newtheorem{remark}[theorem]{Remark}
\title[A theorem of Retakh and higher extension functors]{A theorem of Retakh for exact $\infty$-categories and higher extension functors}
\author[E. D. Børve]{Erlend D. Børve}
\address{Department of mathematical sciences, NTNU, NO-7491 Trondheim, Norway}
\curraddr{Institut Fourier, Université Grenoble Alpes, 100 rue des mathématiques, 38610 Gières, France}
\email{erlend.borve@univ-grenoble-alpes.fr}
\author[P. Trygsland]{Paul Trygsland}
\address{Department of mathematical sciences, NTNU, NO-7491 Trondheim, Norway}
\curraddr{Kjøpmannsgata 35, 7011 Trondheim, Norway}
\email{patrygsl@gmail.com}
\subjclass[2010]{14F35,18E10,18G30}
\date{}
\begin{document}

\begin{abstract}
	We define extension $\infty$-categories for exact $\infty$-categories in terms of bifibrations. Extension $\infty$-categories are invariant when passing to the stable hull, and consequently we show that they form an $\Omega$-spectrum, generalizing a theorem of Retakh. Finally, we show that the homotopy groups of extension $\infty$-categories are naturally isomorphic to the higher extension groups of the extriangulated category given by the homotopy category.
\end{abstract} 	
	
	\maketitle
	
	\tableofcontents

	\section{Introduction}
	
	Boardman--Vogt introduced the homotopy invariant structures known as $\infty$-categories in the early 1970s \cite{BV73}. Their motivation was to develop a machinery for better understanding infinite loop spaces, or ``homotopy abelian groups.'' A central idea is to avoid choices for compositions, but rather have a space encoding all possible choices. As an example, a loop space does not carry a natural well-defined concatenation of loops before applying~$\pi_0$, though it is weakly homotopy equivalent to a topological group~\cite[p.~31]{Ada78}. Nowadays, it is well-known that $\infty$-categories serve as a model for~$(\infty,1)$--categories~\cite{Joy02, Lur09}.
	
	Roughly speaking, an exact category~$\c$ is an additive category together with a collection of short exact sequences subject to certain constraints~\cite{Hel58}, (Keller's axioms \cite{Kel90} can be taken as a modern approach). This allows for homological algebra to be performed in a more general context than abelian categories. Non-abelian examples include the category of vector bundles of a scheme and the category of Banach spaces. In addition to homological algebra, exact categories provide a natural framework for~$\mathrm{K}$-theory. Quillen introduced the higher algebraic~$\mathrm{K}$--groups in this context~\cite{Qui73}.
	
	Barwick defines exact $\infty$-categories in order to generalise definitions and results in~$\mathrm{K}$-theory~\cite{Bar15, Bar16, BR}. This broadens the scope considerably; it captures both (nerves of) exact categories and stable $\infty$-categories. Moreover, every extension closed subcategory of a stable $\infty$-category has the structure of an exact $\infty$-category. In fact, all exact $\infty$-categories occur in this manner; Klemenc shows that all exact $\infty$-categories can be embedded into a stable hull \cite{Kle20}. One would thus expect more constructions and results about exact categories to generalise, paving the way for potentially useful applications. In particular, one could study homological algebra from an $\infty$-categorical point of view.

	To better understand the homological algebra of exact $\infty$-categories~$\C$, we define $\infty$-categorical analogues of extension categories, denoted by~$\Extcat^n_{\C}(b,a)$, where $a$ and $b$ are objects in $\C$. These generalise the well-established extension categories for exact categories. It is well-known that the Yoneda Ext-groups~$\Ext^n_{\c}(b,a)$ of an exact category $\c$ can be recovered as the zeroth homotopy groups of~$\Extcat^n_{\c}(b,a)$ \cite[Proposition~XII.4.4]{Mac67}.
	
	To modernise the notion of extension ($\infty$-)category, we use the language of bifibrations (\Cref{def:bifib}). 
	For an exact $\infty$-category~$\C$, we declare that~$\Extcat^0_{\C}(b,a)\coloneqq \hom_{\C}(b,a)$, the mapping space from~$b$ to~$a$. More precisely, we define $\Extcat^0_{\C}$ to be the mapping space bifibration of $\C$. The first extension bifibration $\Extcat^1_{\C}$ is defined analogously to its $1$-categorical counterpart, and the higher one are obtained by compositions of $\Extcat^1_{\C}$ with itself. 
	
	The main result of ours is the following.
	
	\begin{reptheorem}{thm:SpExtbifun}
		Let $\C$ be an exact $\infty$-category and let $\Sp$ be the $\infty$-category of $\Omega$-spectrum objects. There exists a functor
		\begin{equation}
		\Extcat_{\C}\colon\C\op\times\C\to \Sp
		\end{equation}
		such that the $n$-th component of the spectrum $\Extcat_{\C}(b,a)$ is weakly equivalent to the extension $\infty$-category $\Extcat^n_{\C}(b,a)$ for all $a,b\in \C$.
	\end{reptheorem}
	We deduce that the spectrum
	\begin{equation*}
	\Extcat_\C(b,a) \coloneqq \{\Extcat^n_{\C}(b,a)\}_{n\geq 0}
	\end{equation*}
	is an~$\Omega$-spectrum. This is a generalisation of a theorem of Retakh, who proved the same result for abelian categories \cite[Theorem~2(b)]{Ret86}. It follows that we have isomorphisms 
	\begin{equation}\label{eq:hermanniso}
	\begin{tikzcd}
	\pi_{-n}\Extcat_{\C}(b,a)\simeq \pi_{0}\Extcat^n_{\C}(b,a) \arrow[r]& \pi_{1}\Extcat^{n+1}_{\C}(b,a)
	\end{tikzcd}
	\end{equation}
	of homotopy groups. This means, in particular, that the higher structure of~$\Extcat_\C (b,a)$ descends to the abelian group structure on classical $\Ext$-groups.
	
	To prove Theorem~\ref{thm:SpExtbifun}, we show that the extension categories $\Extcat^n_{\C}(b,a)$ are weakly homotopy equivalent to $\map_{\mathcal{H}^{\mathrm{st}}_{\geq 0}(\C)}(b,\Sigma^n a)$, where $\mathcal{H}^{\mathrm{st}}_{\geq 0}(\C)$ is a certain subcategory of the stable hull of $\C$. The Retakh-esque spectra of extension $\infty$-categories will be weakly equivalent to Lurie's mapping spectra in the prestable $\infty$-category $\mathcal{H}^{\mathrm{st}}_{\geq 0}(\C)$.
	
	There is a notion extriangulated categories, defined by Nakaoka--Palu \cite{NP19}, which simultaneously generalises exact and triangulated categories. Recently, Nakaoka--Palu showed that the homotopy category of an exact $\infty$-category has a natural extriangulated structure \cite[Theorem~4.22]{NP20}. In particular, it encapsulates the now well-known result that the homotopy category of a stable $\infty$-category is canonically triangulated \cite[Theorem~1.1.2.14]{Lur17}, and moreover the homotopy category of the nerve of an exact category is obviously exact. 

	For a non-negative integer~$n$, Gorsky--Nakaoka--Palu \cite[Definition~3.1]{GNP21} define the~$n$-extension groups for extriangulated categories, ultimately inspired by Yoneda's classic monograph on extension categories \cite{Yon60}. We show that the higher structure contained in the Retakh spectra~$\Extcat_{\C}(-,-)$ descends to the~$n$-extension groups in the homotopy category.
	
	\begin{reptheorem}{thm:higherext}
		Let~$\C$ be an exact $\infty$-category. Then the bifunctors $\pi_0\Extcat^n_{\C}(-,-)$ on the extriangulated category $(\homcat\C,\pi_0\Extcat^1_{\C}(-,-))$ is naturally isomorphic to the $n$-th extension functor of~$\homcat \C$.
	\end{reptheorem}

	\textbf{Outline.} Section~\ref{sec:exqcat} contains little to no original ideas. It is concerned with basics of exact $\infty$-categories and diagram lemmas. We define extension categories for $\infty$-categories in Section~\ref{section:extensioncategories}. \Cref{thm:SpExtbifun} is stated and proved in Section \ref{sec:retakh}. In the final Section~\ref{section:extriangulated}, we discuss how Retakh spectra determine the higher structure of the extriangulation on homotopy categories. In particular, Theorem~\ref{thm:higherext} is proven.

	\textbf{Notation and conventions.} Throughout, we fix a Grothendieck universe~$U$. Simplicial sets should thus be understood as simplicial~$U$-sets. We refer to Shulman \cite[Section~8]{Shu08} for a detailed treatment of Grothendieck universes. 
	
	A category~$J$ will be identified with its nerve~$\mathrm{N}(J)$, so that it can be regarded as an $\infty$-category. An~$\infty$-functor between $\infty$-categories~$\C$ and~$\D$ (i.e. a morphism of the underlying simplicial sets) will simply be referred to as a functor.
	
	The~$0$-simplices of an $\infty$-category~$\C$ are often referred to as objects. Similarly,~$1$-simplices are referred to as maps or morphisms.
	We write~$f\sim g$ when the 1-simplicies~$f$ and~$g$ are homotopic. A degenerate 1-simplex~$s_0X$, or a map which is homotopic to it, is denoted by \begin{tikzcd}X\arrow[r,equal]& X\end{tikzcd}.
	
	For two simplicial sets~$X$ and~$Y$, we denote  function complex (or internal hom) by~$\map(X,Y)$, whose set of~$n$-simplices is~$\map(X,Y)_n={\sSet}(X\times [n], Y)$. Note that this space models the homotopy function complex \cite{DK80}.
	In the case of~$\map(\Delta^1,\C)$, where~$\C$ is an $\infty$-category, there is the subcomplex~$\map_{\C}(b,a)$ whose~$0$-simplices are the~$1$-simplices~$b\to a$ in~$\C$. 
	For an ordinary category~$\c$ we denote by~$\hom_{\c}(b,a)$ the set of maps~$b\rightarrow a$.
	
	A \textit{subcategory} of an $\infty$-category $\C$ is a simplicial subset $\mathscr{D}\subseteq \C$ such that the square
	\begin{equation*}
	\begin{tikzcd}
	\mathscr{D}\arrow[d] \arrow[r,hook]& \C\arrow[d] \\
	\mathrm{N}(h\mathscr{D}) \arrow[r,hook]& \mathrm{N}(h\C)
	\end{tikzcd}
	\end{equation*}
	is a pullback diagram in the category of simplicial sets. Alternatively, one can impose the inclusion to be an inner fibration \cite[\href{https://kerodon.net/tag/01CF}{Tag 01CF}]{Ker}. In any case, it is uniquely specified by the subcategory $ h\mathscr{D}$ of $h\C$.
	
	A \textit{homotopy coherent diagram} of~$J$ in an $\infty$-category~$\C$ is a functor~$J\to \C$. A \textit{homotopy commutative diagram} is a functor from~$J$ into the homotopy category~$\homcat\C$.
	
	Recall that the limit (resp. colimit) of a diagram~$D\colon J\to \C$ arises as a left (resp. right) adjoint of the diagonal functor~$\C\to \map(J,\C)$.
	
	A \textit{biproduct} of a family $\{X_i\}_{i\in I}$ of objects in $\C$ is both a product and a coproduct of this family. If it exists, it will be denoted by~$\bigoplus\limits_{i\in I} X_i$.
	
	Let~$\bigoplus\limits_{i=1}^n X_i$ and~$\bigoplus\limits_{j=1}^m Y_j$ be biproducts in~$\C$. Then a map~$f\colon \bigoplus\limits_{i=1}^n X_i\to \bigoplus\limits_{j=1}^m Y_j$ is uniquely determined, up to homotopy, by its components~$f_{j,i}\colon X_i\to Y_j$. We will thus write~$f$ as a matrix~$(f_{j,i})_{i,j}$.
	
	\begin{acknowledgements}
		We are grateful to Rune Haugseng and Yann Palu for helpful comments.
	\end{acknowledgements}
	
	\section{Exact $\infty$-categories}\label{sec:exqcat}
	
	We first review exact $\infty$-categories, a generalisation of exact categories to the realm of $\infty$-categories.
	Since the underlying category of an exact category is additive, it is only sensible to introduce the $\infty$-categorical notion.
	\begin{definition}[{\cite[§2]{Bar15}}]\label{addcat}
		An $\infty$-category~$\C$ is \emph{additive} if the following hold.
		\begin{enumerate}[label=(Add\arabic*), leftmargin=1.3cm]
			\item\label{addcat1} There is a zero object 0 in~$\C$, which is to say that~$\map_\C (0,x)$ and~$\map_\C (x,0)$ are contractible for all~$x\in\C$.
			\item\label{addcat2} Finite products and coproducts exist in~$\C$.
			\item\label{addcat3} For any finite family of objects~$\{x_1,\dots, x_n\}$, the natural map 
			\begin{equation*}
			\coprod_{i=1}^n x_i \to \prod_{i=1}^n x_i,
			\end{equation*}
			induced by the identity maps~$x_i\to^{1}x_i$, is a homotopy equivalence.
			\item For all~$x,y\in\C_0$, the Hom-set~$\hom_{h\C}(x,y)\coloneqq \pi_0\map_{\C}(x,y)$ admits an abelian group structure, where we define ~$f+g$ to be the composite
			\begin{center}
				\begin{tikzcd}[ampersand replacement=\&]
					x\arrow[r,"{\begin{pmatrix} 1 \\ 1 \end{pmatrix}}"] \& x\coprod x\arrow[r,"{\begin{pmatrix} f & 0 \\ 0 & g \end{pmatrix}}"] \& y\prod y \arrow[r,"{\begin{pmatrix} 1 & 1 \end{pmatrix}}"] \& y
				\end{tikzcd}
			\end{center}
			in~$\homcat \C$.
		\end{enumerate}
	\end{definition}
	A bicomplete $\infty$-category~$\C$ is additive precisely when the homotopy category~$\homcat \C$ is additive as an ordinary category. Analogously to how  biproducts arise in additive categories, the axioms \ref{addcat2} and \ref{addcat3} imply that~$\C$ has finite biproducts.
	
	Barwick defines exact $\infty$-categories \cite[Definition~3.1]{Bar15} by adapting Keller's minimal set of axioms \cite{Kel90} to the $\infty$-categorical setting.
	\begin{definition}
		Let~$\C$ be an additive $\infty$-category and let~$\C\cof$ and~$\C\fib$ be subcategories that contain all objects in~$\C$, as well as all homotopy equivalences. The maps in~$\C\cof$ will be referred to as \emph{cofibrations}, whereas morphisms in~$\C\fib$ are \emph{fibrations}. The triple~$(\C,\C\cof,\C\fib)$ is called an \textit{exact $\infty$-category} if the following axioms hold.
		\begin{enumerate}[label=(Ex\arabic*), leftmargin=1.1cm]
			\item\label{Ex1} For any zero object~$0$ in~$\C_0$, all morphisms of the form~$0\to x$ are cofibrations and those of the form~$x\to 0$ are fibrations.  
			\item\label{Ex2} Pushouts of cofibrations exist and are cofibrations, and dually for pullbacks of fibrations.
			\item\label{Ex3} The following are equivalent for a homotopy coherent square 
			\begin{center}
				\begin{tikzcd}
					a\arrow[r,"i"]\arrow[d,"f"] & e\arrow[d,"p"] \\
					c\arrow[r,"g"] & b
				\end{tikzcd}
			\end{center}
			\begin{enumerate}[label=(Ex3.\arabic*), leftmargin=1.4cm]
				\item\label{Ex3a} The square is a pullback, the map~$g$ is a cofibration and~$p$ is a fibration.
				\item\label{Ex3b} The square is a pushout, the map~$i$ is a cofibration and~$f$ is a fibration.
			\end{enumerate}
		\end{enumerate}
	\end{definition}
	If the subcategories~$\C\cof$ and~$\C\fib$ are implicitly specified, we simply say that~$\C$ is exact. A triple~$(\C,\C\cof,\C\fib)$ satisfying the axioms above is called an \textit{exact structure} on~$\C$. Whenever a cofibration appears in a diagram, it will be drawn as follows:~$\mono$ (like a monomorphism in an ordinary category). A fibration will be drawn as a two-headed arrow~$\epi$.
	
	Nerves of exact categories are exact $\infty$-categories, where the class of cofibrations consists of the admissible monomorphisms, and the fibrations are the admissible epimorphisms. Moreover, any stable $\infty$-category \cite[Definition~1.1.1.9]{Lur17} can be seen as an exact $\infty$-category where~$\C\cof=\C=\C\fib$. 
	
	The axioms above provide a framework for exact sequences, in more or less the usual fashion.
	\begin{definition}
		Let~$\C$ be an exact $\infty$-category. An \emph{exact sequence} in~$\C$ is a square 
		\begin{center}
			\begin{tikzcd}
				a\arrow[r]\arrow[d, two heads] & e\arrow[d] \\
				0\arrow[r, tail] & b
			\end{tikzcd}
		\end{center}
		where~$0$ is a zero object and the equivalent criteria in \ref{Ex3} are met.
	\end{definition}
	Equivalently, the exact sequences are bicartesian squares of the form
	\begin{center}
		\begin{tikzcd}
			a\arrow[r,"i",tail]\arrow[d,"",two heads]\arrow[dr, phantom, "\square"] & e\arrow[d,"p",two heads] \\
			0\arrow[r,"",tail] & b
		\end{tikzcd}
	\end{center}
	As is conventional for exact 1-categories, exact sequences can be drawn horizontally
	\begin{equation*}
	\begin{tikzcd}
	a\arrow[r,"i",tail]&e \arrow[r,"p",two heads]&b,
	\end{tikzcd}
	\end{equation*}
	omitting the zero object. There is not really any loss of information in such notation, as the choice of morphisms to/from zero is irrelevant up to homotopy.
	
	\begin{definition}\label{def:mapofseq}
		A \textit{map of exact sequences} is simply a map of bicartesian squares. 
		Such maps will mostly be depicted as homotopy commutative diagrams of the form
		\begin{equation*}
		\begin{tikzcd}
		a\arrow[r,"i",tail]\arrow[d]&e\arrow[r,"p",two heads]\arrow[d]&b\arrow[d] \\
		a'\arrow[r,"i'",tail]&e'\arrow[r,"p'",two heads]&b'
		\end{tikzcd}
		\end{equation*}
	\end{definition}

	Exact functors also have a completely analogous definition.
	\begin{definition}\label{def:exfunc}
		Let~$\C$ and~$\mathscr{D}$ be exact $\infty$-categories. A functor~$F\colon \C\to \mathscr{D}$ is \textit{exact} if it preserves zero objects, cofibrations and pushouts of cofibrations.
	\end{definition}
	When checking that a functor is exact, one may equivalently consider the dual statement for fibrations \cite[Proposition~4.8]{Bar15}. Exact sequences are obviously preserved by exact functors, and by considering split exact sequences one proves that they preserve the additive structure as well.
	
	Full subcategories~$\mathscr{D}$ of a given exact $\infty$-category~$\C$ inherit the exact structure if they are \textit{closed under extensions}, i.e. for all exact sequences 
	\begin{center}
		\begin{tikzcd}
			a\arrow[r,tail] & e\arrow[r,two heads] & b
		\end{tikzcd}
	\end{center}
	with~$a,b\in\mathscr{D}$, we have that~$e\in\mathscr{D}$. The class of cofibrations in $\mathscr{D}$ are the maps in~$\C\cof\cap \mathscr{D}$ for which the cofibre is in~$\mathscr{D}$, and dually for fibrations. The inclusion functor
	$
	\mathscr{D}\embed \C
	$
	is then exact.
	
	It turns out that all exact $\infty$-categories occur as extension closed subcategories of stable $\infty$-categories. 
	\begin{theorem}[{\cite[Theorem~1.2]{Kle20}}]\label{thm:Kle20.1}
		Given a small exact $\infty$-category~$\C$, there exists a stable hull~$\mathcal{H}^{\rm st}(\C)$ into which~$\C$ embeds exactly and universally. 
	\end{theorem}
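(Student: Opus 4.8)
The plan is to build $\mathcal{H}^{\mathrm{st}}(\C)$ explicitly as a Verdier localization of a category of spectral presheaves, and then to check two things about the resulting structure functor $i\colon\C\to\mathcal{H}^{\mathrm{st}}(\C)$: that it is a fully faithful exact functor, and that it has the stated universal property among exact functors into stable quasi-categories. Concretely, set $\mathcal{P}\coloneqq\Fun^{\mathrm{add}}(\C\op,\Sp)$, the quasi-category of additive (i.e. finite-product-preserving) presheaves of spectra on $\C$; because $\Sp$ is stable, $\mathcal{P}$ is again a stable quasi-category with all small limits and colimits. The additive Yoneda embedding $h\colon\C\to\mathcal{P}$ sending $X$ to $\map_{\C}(-,X)$ is well defined since $\C$ is additive, so that its mapping spaces underlie connective spectra, and $h$ is fully faithful and additive. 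Let $\langle h(\C)\rangle\subseteq\mathcal{P}$ be the smallest stable subcategory containing the image of $h$; standard cocompletion arguments identify $(\langle h(\C)\rangle,h)$ as the universal stable quasi-category equipped with an additive functor from $\C$, so that restriction along $h$ is an equivalence $\Fun^{\mathrm{ex}}(\langle h(\C)\rangle,\D)\xrightarrow{\ \sim\ }\Fun^{\mathrm{add}}(\C,\D)$ for every stable $\D$. For every exact sequence $\sigma\colon A\rightarrowtail E\twoheadrightarrow B$ in $\C$ the composite $A\to E\to B$ is null by \ref{Ex1}, so the map $h(E)\to h(B)$ factors canonically as $h(E)\to\Coker{h(A)\to h(E)}\xrightarrow{\varphi_\sigma}h(B)$ in $\langle h(\C)\rangle$. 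I would then define $\mathcal{H}^{\mathrm{st}}(\C)$ to be the Verdier quotient of $\langle h(\C)\rangle$ by the thick subcategory generated by the cofibers $\Coker{\varphi_\sigma}$, and let $i$ be the composite of $h$ with the quotient functor. By construction $i$ sends each $\sigma$ to a cofiber sequence, since $\varphi_\sigma$ becomes an equivalence in the quotient, and from here one checks that $i$ is exact (preservation of pushouts of cofibrations reduces to the fact that a pushout of a cofibration shares its cofiber).

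The universal property is then essentially formal. An exact functor $F\colon\C\to\D$ into a stable quasi-category is in particular additive, hence corresponds to an exact functor $\tilde F\colon\langle h(\C)\rangle\to\D$ with $\tilde F h\simeq F$. Since $F$ carries $\sigma$ to an exact sequence in $\D$ — which in a stable quasi-category is simultaneously a fiber and a cofiber sequence — the map $\tilde F(\varphi_\sigma)\colon\Coker{F(A)\to F(E)}\to F(B)$ is an equivalence, so $\tilde F$ annihilates every $\Coker{\varphi_\sigma}$ and factors uniquely through $\mathcal{H}^{\mathrm{st}}(\C)$; conversely every exact functor out of $\mathcal{H}^{\mathrm{st}}(\C)$ restricts along $i$ to an exact functor on $\C$. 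Assembling these identifications gives an equivalence $\Fun^{\mathrm{ex}}(\mathcal{H}^{\mathrm{st}}(\C),\D)\simeq\Fun^{\mathrm{ex}}(\C,\D)$, natural in the stable quasi-category $\D$, which is the ``universally'' clause of the statement.

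The genuinely hard part, and the step I expect to be the main obstacle, is showing that $i$ is \emph{fully faithful}, equivalently that passing to the Verdier quotient does not alter mapping spectra between objects coming from $\C$. This is the quasi-categorical counterpart of the classical — and nontrivial — fact that an exact category embeds fully faithfully into its bounded derived category. I would prove it by equipping $\mathcal{H}^{\mathrm{st}}(\C)$ with a bounded weight structure whose coheart is the image of $\C$: one shows that every object admits a finite weight filtration with graded pieces in $i(\C)$, obtained by iteratively splitting off the classes of the objects $iB$ arising from exact sequences $A\rightarrowtail E\twoheadrightarrow B$, and that such filtrations are essentially unique. The weight-structure formalism then computes $\map_{\mathcal{H}^{\mathrm{st}}(\C)}(iX,iY)$ via the associated weight complex, which for $X,Y\in\C$ collapses back to $\map_{\C}(X,Y)$. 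The exact structure is used precisely here: the pushout--pullback axiom \ref{Ex3}, together with the factorization property of \Cref{prop:VW3.1}, is what lets one construct these resolutions and straighten the zig-zags of the calculus of fractions that computes morphisms in the quotient. A more computational alternative would be to model $\mathcal{H}^{\mathrm{st}}(\C)$ directly as a quasi-category of bounded chain complexes in $\C$ localized at the exact-structure quasi-isomorphisms, and to prove full faithfulness through explicit resolutions in the spirit of classical homological algebra.

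Once full faithfulness is established it provides the ``embeds exactly'' clause, and since the universal property determines $(\mathcal{H}^{\mathrm{st}}(\C),i)$ up to equivalence, the construction above — one of several possible ones — indeed yields the stable hull asserted in the theorem.
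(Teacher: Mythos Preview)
The paper does not prove this theorem at all: it is quoted as \cite[Theorem~1]{Kle20} and used as a black box, so there is no in-paper proof to compare against. Your sketch is therefore not a reconstruction of anything in this paper but an attempt at Klemenc's argument itself.

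That said, your outline is broadly in the right spirit and close to what Klemenc actually does: embed $\C$ into spectral presheaves, cut down to a small stable subcategory, then force exact sequences to become (co)fiber sequences by a Verdier-type localization, with the universal property following formally and full faithfulness being the substantive point. Two remarks are in order. First, your appeal to \Cref{prop:VW3.1} is misplaced: that factorization result is about morphisms in $\Extcat^n_{\C}(B,A)$ and is proved later in \emph{this} paper for a different purpose; it plays no role in Klemenc's proof and invoking it here would be circular in any case. Second, your ``main obstacle'' paragraph is honest but underspecified: the existence of a bounded weight structure with coheart $i(\C)$ is not automatic from the construction you gave, and the sentence ``iteratively splitting off the classes of the objects $iB$'' hides exactly the work. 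Klemenc's actual argument for full faithfulness goes through a careful analysis of the localizing subcategory and a description of the acyclic objects, not through \ref{Ex3} plus factorization of maps of $n$-extensions. If you want to turn this into a real proof you should either follow Klemenc's route directly or make precise which axioms of a weight structure you are verifying and how the exact-category axioms enter.
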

	
	We will reveal some of the structure of $\mathcal{H}^{\rm st}(\C)$ when we need it in Section \ref{sec:retakh}. \Cref{thm:Kle20.1} can be seen as an $\infty$-categorical version of the Gabriel--Quillen embedding theorem for exact categories, which states that they embed exactly into abelian categories.
	
	A number of results concerning exact $\infty$-categories generalise to exact $\infty$-categories. We will make frequent use of our next lemma, first generalised by Barwick.
	\begin{lemma}[{\cite[Lemma~4.5]{Bar15}}]\label{Bar15.4.5}
		A homotopy coherent square
		\begin{center} 
			\begin{tikzcd}
				a\arrow[r,"i"]\arrow[d,"f"] & e\arrow[d,"p"] \\
				c\arrow[r,"g"] & b
			\end{tikzcd}
		\end{center}
		in an exact $\infty$-category is bicartesian if either of the following conditions holds.
		\begin{enumerate}
			\item\label{Bar15.4.5.1} The square is a pushout and~$i$ is a cofibration.
			\item\label{Bar15.4.5.2} The square is a pullback and~$p$ is a fibration. 
		\end{enumerate}
	\end{lemma}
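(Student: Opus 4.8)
This is \cite[Lemma~4.5]{Bar15}; for completeness my plan is to give a short proof that avoids diagram chasing by invoking the stable hull of Theorem~\ref{thm:Kle20.1}. The point is that both halves of the lemma collapse to the same triviality in a stable quasi-category, where a square is a pushout if and only if it is a pullback \cite[Proposition~1.1.3.4]{Lur17}.

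Concretely, I would proceed as follows. First reduce to the case that $\C$ is small; enlarging the ambient universe $U$ affects neither the exact structure on $\C$ nor the notions of homotopy (co)limit, so there is no loss of generality. By Theorem~\ref{thm:Kle20.1} fix an exact, fully faithful embedding $\iota\colon\C\hookrightarrow\mathcal{S}$ into a stable quasi-category $\mathcal{S}=\mathcal{H}^{\rm st}(\C)$. For part~\ref{Bar15.4.5.1}: since $i$ is a cofibration, the square exhibits $B$ as a pushout of a cofibration, which $\iota$ preserves by Definition~\ref{def:exfunc}; hence $\iota$ carries the square to a pushout square in $\mathcal{S}$, which is therefore also a pullback square there; and since $\iota$ is fully faithful it reflects pullback cones, so the original square is a pullback in $\C$, hence bicartesian. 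Part~\ref{Bar15.4.5.2} runs in exactly the same way after interchanging the roles of pushouts and pullbacks, using that an exact functor also preserves pullbacks of fibrations (the fibration-phrased form of exactness, \cite[Proposition~4.8]{Bar15}).

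The only step that requires genuine care is the last one in each case: I am using that a fully faithful functor reflects limit \emph{cones} (and, dually, colimit cones), not merely that it reflects (co)limit \emph{objects}. This is essential because a priori we do not know the relevant (co)limit exists in $\C$ at all: in part~\ref{Bar15.4.5.1}, axiom~\ref{Ex2} gives that $g$ is a cofibration as a cobase change of $i$, but nothing forces $p$ to be a fibration, so we cannot use~\ref{Ex2} to manufacture the pullback of $C\xrightarrow{g}B\xleftarrow{p}E$ beforehand. The reflection statement itself is a one-line verification: for every $X\in\C_0$ the comparison $\hom_{\C}(X,-)\rightarrow\hom_{\mathcal{S}}(\iota X,\iota(-))$ is an equivalence, so the cone represents the same (co)presheaf on $\C$ as its image represents on $\mathcal{S}$, and the latter is a (co)limit cone. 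If one insisted on an argument internal to $\C$, one could instead form the common cofiber $D$ of $i$ and $g$, obtain a map of exact sequences, and check that the left-hand square is cartesian; but that verification is essentially a reformulation of the lemma, so I expect the stable-hull route to be the most efficient.
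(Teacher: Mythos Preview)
The paper does not give its own proof of this lemma; it simply cites \cite[Lemma~4.5]{Bar15}. Your stable-hull argument is sound as written: exact functors preserve pushouts of cofibrations, pushouts and pullbacks coincide in a stable quasi-category, and fully faithful functors do reflect limit cones---the one-line verification you sketch is correct and is indeed the only point requiring care.

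That said, there is a circularity risk you should address before offering this as a proof ``for completeness''. Klemenc's construction of the stable hull, and the verification that $\C\hookrightarrow\mathcal{H}^{\rm st}(\C)$ is exact and fully faithful, rest on the basic diagram calculus for exact quasi-categories developed by Barwick, quite plausibly including \cite[Lemma~4.5]{Bar15} itself or its immediate consequences; you would need to inspect \cite{Kle20} to rule this out. The direct argument, by contrast, uses only the axioms and is no longer than yours: since $i$ is a cofibration and the square is a pushout, $g$ is a cofibration by \ref{Ex2}; take its cofiber $D$, so the square $(C,B,0,D)$ is bicartesian; pasting pushouts shows the outer rectangle $(A,E,0,D)$ is a pushout along the cofibration $i$ and the fibration $A\twoheadrightarrow 0$, hence bicartesian by \ref{Ex3}; the pullback pasting law then forces the original square to be a pullback. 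Part~\eqref{Bar15.4.5.2} is dual. So the stable-hull detour buys conceptual clarity but not brevity, and trades an elementary dependency for a deep one.
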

	
	Generalisations of celebrated diagram lemmas will also be helpful in later sections. The first relates pushouts to maps of exact sequences. 
	
	\begin{lemma}[{\cite[Proposition~A.1]{Kle20}}]\label{Buh2.12}
		A pushout
		\begin{center}
			\begin{tikzcd}
				a \arrow[r, "i",tail] \arrow[d] \arrow[dr, phantom, "\ulcorner",very near end]  & e \arrow[d] \\
				
				c \arrow[r,tail]               & f                                    
			\end{tikzcd}
		\end{center}
		where~$i$ is cofibration, can be extended to a map of exact sequences 
		\begin{center}
			\begin{tikzcd}
				a \arrow[r, "i",tail] \arrow[d] \arrow[dr, phantom, "\ulcorner",very near end]  & e \arrow[d]\arrow[r,two heads,"p"] & b\arrow[d,equal] \\
				
				c \arrow[r,tail]               & f \arrow[r,two heads] & b                                  
			\end{tikzcd}
		\end{center}
		Conversely, the existence of such a map of exact sequences implies that the square is a pushout. A dual statement holds for pullbacks along fibrations.
	\end{lemma}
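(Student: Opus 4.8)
The plan is to prove the two implications separately. For the forward direction I would need only Barwick's axiom~\ref{Ex2} together with the pasting lemma for pushouts in quasi-categories; for the converse I would bootstrap from the forward direction and then appeal to Klemenc's stable hull (\Cref{thm:Kle20.1}) to import a short five lemma.

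First, the forward implication. Since~$i\colon A\to E$ is a cofibration and the square is a pushout, \ref{Ex2} tells us that~$C\to F$ is again a cofibration, so it fits into an exact sequence~$C\rightarrowtail F\twoheadrightarrow B'$ with~$B'$ its cofiber (the defining square is bicartesian by~\eqref{Bar15.4.5.1} of~\Cref{Bar15.4.5}). I would then stack the given pushout square on top of the cofiber square of~$C\to F$,
\begin{center}
\begin{tikzcd}
A\arrow[r,"i",tail]\arrow[d] & E\arrow[d] \\
C\arrow[r,tail]\arrow[d] & F\arrow[d,two heads] \\
0\arrow[r] & B'
\end{tikzcd}
\end{center}
and apply the pasting lemma: the top square is a pushout by hypothesis, the bottom one by construction, so the outer rectangle is a pushout. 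But that outer rectangle exhibits~$B'$ as the pushout of~$0\leftarrow A\rightarrowtail E$, i.e.\ as the cofiber of~$i$; hence~$B'\simeq B$, and under this identification the right vertical composite~$E\to F\to B'$ becomes the cofiber projection~$p\colon E\twoheadrightarrow B$. This produces exactly the asserted map of exact sequences, the right-hand square commuting by the last identification.

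For the converse, suppose we are given a map of exact sequences whose left square has legs~$i\colon A\to E$,~$a\colon A\to C$ and~$j\colon C\to F$,~$e\colon E\to F$, with the identity on the two cofibers~$B$. I would form the genuine pushout~$P$ of~$C\xleftarrow{a}A\xrightarrow{i}E$, which exists with~$C\rightarrowtail P$ a cofibration by~\ref{Ex2}; by the forward implication just proved, $C\rightarrowtail P\twoheadrightarrow B$ is exact and this square extends to a map of exact sequences out of~$A\rightarrowtail E\twoheadrightarrow B$. The universal property of~$P$ supplies a comparison map~$\phi\colon P\to F$ restricting to~$e$ on~$E$ and to~$j$ on~$C$, and comparing the two induced maps~$P\to B$ on these two summands shows that~$\phi$ is compatible with the cofiber projections. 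Thus~$\phi$ is a morphism of exact sequences which is the identity on the sub-object~$C$ and on the quotient~$B$.

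The crux — and the step I expect to be the genuine obstacle — is then that any such~$\phi$ must be an equivalence; within the scope of the excerpt there is no short five lemma available to invoke directly. I would obtain it by embedding~$\C$ exactly and fully faithfully into its stable hull~$\mathcal{H}^{\mathrm{st}}(\C)$ via~\Cref{thm:Kle20.1}: there the two exact sequences become cofiber sequences, $\phi$ becomes a map of cofiber sequences which is an equivalence on two of the three terms, hence on the third, and full faithfulness of the embedding reflects the equivalence back to~$\C$. Once~$\phi$ is invertible, the left square of the given map of exact sequences is a pushout, being equivalent to the pushout square defining~$P$; the dual statement for pullbacks along fibrations then follows by passing to~$\C\op$. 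Throughout, the only real care needed is in the homotopy-coherent bookkeeping: all the compatibilities invoked above (the commuting right-hand squares, the factorization of~$\phi$ through~$P$, its compatibility with the projections to~$B$) should be arranged at the level of coherent diagrams, not merely in~$\homcat\C$.
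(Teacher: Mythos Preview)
The paper does not supply its own proof of this lemma; it is cited from \cite[Proposition~A.1]{Kle20}. So there is no in-paper argument to compare against, and I can only assess your proposal on its own merits.

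Your forward direction is correct and is the standard argument: \ref{Ex2} gives that $C\to F$ is a cofibration, and the pasting lemma identifies its cofiber with that of~$i$.

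The converse is where the real content lies, and your strategy---form the genuine pushout~$P$, produce the comparison $\phi\colon P\to F$, and argue $\phi$ is an equivalence---is the right one. The difficulty is your justification of the last step. You invoke \Cref{thm:Kle20.1}, the stable hull, to import a short five lemma. But \Cref{thm:Kle20.1} is \cite[Theorem~1]{Kle20}, while the statement you are proving is \cite[Proposition~A.1]{Kle20}; the appendix results in Klemenc's paper are foundational lemmas used throughout the construction of the stable hull, so this appeal is almost certainly circular. The same concern applies to the alternative you dismissed: the paper \emph{does} contain a Five Lemma (\Cref{5lem}), but its proof goes through \Cref{Buh3.1}, which is \cite[Proposition~A.2]{Kle20} and is in turn built on A.1.

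In short: the architecture of your converse argument is sound, but the specific tool you reach for to close it would make the proof circular. Klemenc's own proof of A.1 must establish that $\phi$ is an equivalence by a more elementary route (for instance, first showing directly that the left square is a pullback via pasting with the two bicartesian cofiber squares, and then upgrading to a pushout), and that is the step you would need to supply.
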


	Secondly, a map of exact sequences has a canonical factorisation.        
	\begin{lemma}[{\cite[Proposition~A.2]{Kle20}}]\label{Buh3.1}
		Any map of exact sequences
		\begin{center}
			\begin{tikzcd}
				a \arrow[r, "i_a",tail] \arrow[d,"f'"]   & e \arrow[d,"f"]\arrow[r,two heads,"p_b"] & b\arrow[d,"f''"] \\
				
				c \arrow[r,tail,"i_c"]               & f \arrow[r,two heads,"p_d"] & d                                  
			\end{tikzcd}
		\end{center}
		can be factored
		\begin{center}
			\begin{tikzcd}
				a \arrow[r, "i_a",tail] \arrow[d,"f'"]\arrow[dr, phantom, "\square"]   & e \arrow[d,"g"]\arrow[r,two heads,"p_b"] & b\arrow[d,equal] \\
				c \arrow[r, "j",tail] \arrow[d,equal]   & z \arrow[d,"h"]\arrow[r,two heads,"q"]\arrow[dr, phantom, "\square"] & b\arrow[d,"f''"] \\
				c \arrow[r,tail,"i_c"]               & f \arrow[r,two heads,"p_d"] & d                                     
			\end{tikzcd}
		\end{center}
		where the squares marked by~$\square$ are bicartesian.
	\end{lemma}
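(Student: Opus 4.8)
The plan is to build the middle term~$Z$ as a pushout and then read off every bicartesian square from the diagram lemmas already available.

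First I would form the pushout~$Z\coloneqq C\sqcup_A E$: since~$i_A$ is a cofibration, \ref{Ex2} guarantees this pushout exists and that the induced map~$j\colon C\to Z$ is again a cofibration; write~$g\colon E\to Z$ for the other leg. By \Cref{Bar15.4.5}\eqref{Bar15.4.5.1} the resulting square is bicartesian, and this is the top-left square of the claimed factorization. Since the top row~$A\xrightarrow{i_A}E\xrightarrow{p_B}B$ is exact, the map~$p_B$ exhibits~$B$ as the cofiber of~$i_A$, so applying \Cref{Buh2.12} to this pushout produces a map of exact sequences which is the identity on the cokernel; that is, an exact sequence~$C\xrightarrow{j}Z\xrightarrow{q}B$ with~$qg\simeq p_B$ and~$qj\simeq 0$. (Alternatively, exactness of this row follows by pasting the pushout square vertically with the exact-sequence square of~$A\to E\to B$.) Finally, the left-hand square of the given map of exact sequences is a coherent commuting square~$f\circ i_A\simeq i_C\circ f'$, so the pair~$(i_C\colon C\to F,\,f\colon E\to F)$ factors through the pushout to produce~$h\colon Z\to F$ with~$hj\simeq i_C$ and~$hg\simeq f$. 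By construction the two middle-column squares compose up to homotopy to the original map~$f$ while the outer columns remain~$f'$ and~$f''$, so the displayed three-row diagram will factor the original map once we know~$p_D h\simeq f'' q$.

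It then remains to identify the bottom-right square. I would verify~$p_D h\simeq f'' q$ as maps~$Z\to D$: by the universal property of the pushout it is enough to compare the two maps after restriction along~$g$ and along~$j$, compatibly over~$A$; along~$g$ both sides become~$\simeq p_D f\simeq f'' p_B$ by the right-hand square of the given map of exact sequences, and along~$j$ both sides become~$\simeq 0$ using~$p_D i_C\simeq 0$. This upgrades the bottom two rows to a map of exact sequences~$(C\xrightarrow{j}Z\xrightarrow{q}B)\to(C\xrightarrow{i_C}F\xrightarrow{p_D}D)$ whose component on the kernel~$C$ is the identity. Applying the dual of \Cref{Buh2.12}, such a map exhibits its right-hand square, with corners~$Z,B,F,D$ and maps~$q,h,f'',p_D$, as a pullback; and since~$p_D$ is a fibration it is bicartesian by \Cref{Bar15.4.5}\eqref{Bar15.4.5.2}. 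Together with the top-left square this gives both squares marked~$\square$, completing the factorization.

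The only real difficulty is the coherence bookkeeping: producing~$h$ and~$q$ out of the pushout, and checking~$p_D h\simeq f'' q$, requires feeding in the coherent~$2$--cells that form part of the datum of a map of exact sequences and verifying that they glue over~$A$. If one prefers to sidestep this, one can instead pass to the stable hull~$\mathcal{H}^{\mathrm{st}}(\C)$ of \Cref{thm:Kle20.1}, carry out the factorization there, where bicartesian squares are precisely the (co)fiber sequences, and use that~$\C$ is closed under extensions: ~$C\xrightarrow{j}Z\xrightarrow{q}B$ is a cofiber sequence with~$C,B\in\C$, whence~$Z\in\C$. Everything else is a formal application of \Cref{Bar15.4.5} and \Cref{Buh2.12}.
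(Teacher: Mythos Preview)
The paper does not give its own proof of this lemma; it is quoted verbatim from \cite[Proposition~A.2]{Kle20} and then used freely. So there is nothing in the present paper to compare against. Your argument is the standard one and is correct: form~$Z$ as the pushout of~$i_A$ along~$f'$, use \Cref{Buh2.12} to complete the middle row and obtain~$q$, induce~$h$ from the pushout, and then invoke the converse direction of the dual of \Cref{Buh2.12} to see that the bottom-right square is a pullback. This is exactly the classical exact-category proof transported to the quasi-categorical setting, and it is also how Klemenc proves it.

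The only place that deserves a comment is the one you already flag: checking~$p_D h\simeq f''q$ via the universal property of the pushout requires not just that the two candidate maps agree along~$g$ and along~$j$ separately, but that the chosen null-homotopies over~$A$ match. Concretely, the mapping space out of a pushout is a pullback of mapping spaces, so ``agreement on both legs'' means a point in that pullback, which carries a specified homotopy over~$A$. Here the relevant homotopies are supplied by the~$2$--cells that are part of the given map of exact sequences (a map of bicartesian squares, per \Cref{def:mapofseq}), so the coherence data is already present; one just has to feed it in. Your suggested alternative of passing to the stable hull via \Cref{thm:Kle20.1} is a legitimate way to avoid writing this out, but it is heavier than necessary and would make the lemma logically depend on a much deeper result.
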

	
	The renowned Five Lemma is a consequence of \Cref{Buh3.1}. It will be proven in \Cref{app:diaproofs}, since there is no significant difference from the special case of exact 1-categories.
	\begin{lemma}[Five lemma]\label{5lem}
		Consider a map of exact sequences
		\begin{center}
			\begin{tikzcd}
				a \arrow[r, "i_a",tail] \arrow[d,"f'"]   & e \arrow[d,"f"]\arrow[r,two heads,"p_b"] & b\arrow[d,"f''"] \\
				
				c \arrow[r,tail,"i_c"]               & f \arrow[r,two heads,"p_d"] & d                                  
			\end{tikzcd}
		\end{center}
		If~$f'$ and~$f''$ are homotopy equivalences (resp. cofibrations, resp. fibrations), so is~$f$.  
	\end{lemma}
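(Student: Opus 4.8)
The plan is to exploit the canonical factorization of \Cref{Buh3.1}. Applied to the given map of exact sequences, it yields, up to homotopy, a factorization $f\simeq h\circ g$ in which $g\colon E\to Z$ is the cobase change of $f'$ along the cofibration $i_A$ (the upper-left square of the factorization is bicartesian, in particular a pushout) and $h\colon Z\to F$ is the base change of $f''$ along the fibration $p_D$ (the lower-right square is bicartesian, in particular a pullback). Since the classes of homotopy equivalences, of cofibrations, and of fibrations are each closed under composition and under homotopy, it suffices to check that $g$ inherits the relevant property from $f'$ and $h$ from $f''$; then $f\simeq h\circ g$ lies in the same class.

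So I would establish the following stability statements. For cobase change along a cofibration: a cofibration stays a cofibration by \ref{Ex2}; a homotopy equivalence stays a homotopy equivalence, since $\hom_\C(Z,W)$ is the pullback of $\hom_\C(E,W)\to\hom_\C(A,W)\leftarrow\hom_\C(C,W)$ and the base change of the equivalence $(f')^\ast$ along $i_A^\ast$ is an equivalence, so that $g^\ast$ is an equivalence for every $W$; and a fibration $f'\colon A\twoheadrightarrow C$ stays a fibration, because, writing $\kappa\colon K\rightarrowtail A$ for its kernel, one has $C\simeq\Coker{\kappa}$ and hence, pasting pushout squares, $Z\simeq\Coker{i_A\circ\kappa}$, and since $i_A\circ\kappa$ is a composite of cofibrations, \ref{Ex3} exhibits $g\colon E\to Z$ as the fibration of the exact sequence $K\rightarrowtail E\twoheadrightarrow Z$. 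The statements for base change along a fibration are dual in every respect: a fibration stays a fibration by \ref{Ex2}; a homotopy equivalence stays one by the dual argument with $\hom_\C(W,-)$; and a cofibration $f''\colon B\rightarrowtail D$ stays a cofibration, because, writing $n\colon D\twoheadrightarrow N$ for its cokernel, one has $B\simeq\Ker{n}$ and hence, pasting pullback squares, $Z\simeq\Ker{n\circ p_D}$, so that, $n\circ p_D$ being a composite of fibrations, \ref{Ex3} exhibits $h\colon Z\to F$ as the cofibration of the exact sequence $Z\rightarrowtail F\twoheadrightarrow N$. In each of the three cases $g$ and $h$ then lie in the prescribed class, hence so does $f\simeq h\circ g$.

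The factorization, the pasting laws for bicartesian squares, and the closure of homotopy equivalences under base and cobase change are all formal and go through exactly as for ordinary exact categories, so no genuinely new idea is needed. The point I expect to be the crux is the last stability statement in each list --- that the cobase change of a fibration along a cofibration is again a fibration, and dually that the base change of a cofibration along a fibration is again a cofibration. Neither is immediate from the axioms; the trick is to recognise the relevant map as the fibration (resp.\ the cofibration) of the exact sequence one obtains as the cokernel of a composite of cofibrations (resp.\ as the kernel of a composite of fibrations), using that $\C\cof$ and $\C\fib$ are closed under composition, and then to invoke \ref{Ex3}.
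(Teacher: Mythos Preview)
Your proof is correct and follows the same route as the paper: factor $f$ via \Cref{Buh3.1} and check that each factor inherits the property in question. The only difference is in the step you flag as the crux --- the paper observes that \ref{Ex3}, applied directly to the (transposed) lower-right bicartesian square, which is a pullback with cofibration $f''$ and fibration $p_D$, already forces $h$ to be a cofibration, so your detour through cokernels and composites of fibrations, while valid, is unnecessary (and dually for $g$ in the fibration case).
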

	
	As a curiosity, we also lift the $3\times 3$-lemma to the $\infty$-categorical setting. This proof is also deferred to \Cref{app:diaproofs}.
	\begin{lemma}[$3\times 3$--lemma]\label{lemma:3x3}
		Consider the homotopy coherent diagram with exact columns
		\begin{center}
			\begin{tikzcd}
				a' \arrow[d,tail,"i_a"]\arrow[r,"f'"] & e' \arrow[r,"g'"]\arrow[d,tail,"i_e"] & b'\arrow[d,tail,"i_b"] \\
				a \arrow[r,"f"]\arrow[d,two heads,"p_a"] & e \arrow[r,"g"]\arrow[d,two heads,"p_e"] & b\arrow[d,two heads,"p_b"] \\
				a''\arrow[r,"f''"]& e''\arrow[r,"g''"] & b''
			\end{tikzcd}
		\end{center}
		If the middle row and one of the other rows are exact, then the remaining row is exact.
	\end{lemma}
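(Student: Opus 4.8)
Since the statement contains two cases (``middle and top rows exact $\Rightarrow$ bottom row exact'' and ``middle and bottom rows exact $\Rightarrow$ top row exact''), I would first observe that they are interchanged by passing to~$\C\op$ and turning the diagram upside down: the axioms defining an exact quasi-category are self-dual, this operation preserves exactness of the columns, and it swaps the roles of the top and bottom rows. So it suffices to assume that the three columns, the top row~$A'\rightarrowtail E'\twoheadrightarrow B'$ and the middle row~$A\rightarrowtail E\twoheadrightarrow B$ are exact, and to prove that~$A''\rightarrowtail E''\twoheadrightarrow B''$ is exact.

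Read vertically, the first two columns constitute a map of exact sequences from~$(A'\rightarrowtail A\twoheadrightarrow A'')$ to~$(E'\rightarrowtail E\twoheadrightarrow E'')$, and I would apply \Cref{Buh3.1} to it. The resulting factorization passes through the pushout~$P\coloneqq A\cup_{A'}E'$: the upper bicartesian square is the defining pushout, so its legs~$\ell\colon A\rightarrowtail P$ and~$E'\rightarrowtail P$ are cofibrations by~\ref{Ex2}; the intermediate exact sequence is~$E'\rightarrowtail P\twoheadrightarrow A''$; and the lower bicartesian square is
\begin{center}
\begin{tikzcd}[ampersand replacement=\&]
P \arrow[r,"h",tail]\arrow[d,two heads] \& E\arrow[d,"p_E",two heads] \\
A''\arrow[r,"f''"] \& E''
\end{tikzcd}
\end{center}
with~$h\ell\simeq f$, and with~$h$ restricting along the other leg~$E'\rightarrowtail P$ to~$i_E$. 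From~\ref{Ex3} the maps~$h$ and~$f''$ are cofibrations, and since the square is a pushout one gets~$\Coker{f''}\simeq\Coker{h}$. It then remains to identify~$\Coker{h}$ with~$B''$ and to check that the induced cokernel map~$E''\twoheadrightarrow B''$ is~$g''$.

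For the first point I would run the ``third isomorphism'' argument on the composite of cofibrations~$h\ell\colon A\to E$. By \Cref{Buh2.12}, applied to the defining pushout of~$P$ with~$f'$ as the distinguished cofibration, $\Coker{\ell}\simeq B'$ (and the cokernel projection~$P\twoheadrightarrow B'$ restricts to~$g'$ along~$E'\rightarrowtail P$); moreover~$h\ell\simeq f$ has cokernel~$B$ because the middle row is exact. Pasting the cofiber square of~$\ell$ with the pushout of~$h$ along~$P\twoheadrightarrow\Coker{\ell}=B'$, the pasting law for pushout squares identifies the total pushout with~$\Coker{f}=B$; hence a second application of \Cref{Buh2.12} (to the pushout of~$h$, with~$h$ distinguished) produces an exact sequence~$B'\rightarrowtail B\twoheadrightarrow\Coker{h}$, whose cofibration~$B'\rightarrowtail B$ I would identify with~$i_B$ using the commuting square~$g\,i_E\simeq i_B\,g'$ of the right column and the fact that~$g'$ is an epimorphism of~$\homcat\C$. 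Exactness of the right column then yields~$\Coker{h}\simeq B''$, with cokernel map~$p_B\,g$. Transporting this along~$\Coker{f''}\simeq\Coker{h}$ and precomposing the cokernel map~$E''\twoheadrightarrow B''$ with the epimorphism~$p_E$ of~$\homcat\C$, the relation~$p_B\,g\simeq g''\,p_E$ --- which is exactly the bottom-right square of the given diagram --- forces that cokernel map to be~$g''$. This shows that the bottom row~$A''\rightarrowtail^{f''} E''\twoheadrightarrow^{g''} B''$ is exact.

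The conceptual content is carried by \Cref{Buh3.1}; everything after it is homotopy-categorical bookkeeping, namely checking that the maps induced between the various pushouts and cokernels agree, up to homotopy, with~$f''$,~$g''$ and~$i_B$ as they occur in the diagram, together with the elementary ``composition of cofibrations'' fact used above, which I would not isolate as a separate lemma but derive in place from \Cref{Buh2.12} and the pasting law for pushout squares. I expect this bookkeeping --- rather than any single conceptual step --- to be the main obstacle, but since none of it departs from the classical exact-category argument the full details can safely be deferred to the appendix.
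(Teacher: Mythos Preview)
Your overall strategy matches the paper's: both apply \Cref{Buh3.1} to the map between the first two columns, obtaining the pushout $P=A\cup_{A'}E'$ and the bicartesian square relating $h\colon P\rightarrowtail E$ to $f''\colon A''\to E''$. The paper, however, also applies \Cref{Buh3.1} to the map between the top and middle \emph{rows}; since both factorizations pass through the same pushout, this second application directly produces the bicartesian square $P\to E$, $B'\to B$ whose bottom edge is~$i_B$, and pasting it with the exact right column finishes the proof.

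You instead rebuild that square by pushing out~$h$ along $P\twoheadrightarrow B'$ and then try to identify the induced edge $B'\to B$ with~$i_B$, and later the induced $E''\twoheadrightarrow B''$ with~$g''$. The gap is in those identifications: you cancel~$g'$ and~$p_E$ on the right, claiming they are ``epimorphisms of~$\homcat\C$''. But fibrations in an exact quasi-category need \emph{not} be epimorphisms in the homotopy category --- the paper says so explicitly in the discussion preceding \Cref{proposition:is ClassicalExt}, and stable quasi-categories give concrete counterexamples: for a fibration $E\twoheadrightarrow B$ with fiber~$A$, the long exact $\Hom$-sequence shows that precomposition has kernel the image of $\Hom_{\homcat\C}(\Sigma A,-)$, which is typically nonzero. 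So these cancellation steps are invalid.

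The identifications you need can be made, but only by tracking the homotopy-coherent data (functoriality of cofibers at the level of spans) rather than by working in~$\homcat\C$; this is precisely the place where the classical exact-category argument breaks down quasi-categorically, so it cannot be dismissed as ``bookkeeping that can safely be deferred''. Applying \Cref{Buh3.1} to the rows, as the paper does, bypasses the issue entirely because~$i_B$ then appears as part of the output of the lemma rather than as something to be recognized after the fact.
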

	
	\section{$\infty$-categories of extensions}
	\label{section:extensioncategories}
	Fix an exact $\infty$-category~$\C$. We will generalise MacLane's notion of extension category \cite{Mac67} to the $\infty$-categorical setting using modern techniques. Specifically, our constructions are defined in terms of bifibrations \cite[§2.4.7]{Lur09}. Since bifibrations specify bifunctors, we will arrive at a definition of extension bifunctors for exact $\infty$-categories.
	
	Let us first recall the definition of a bifibration.
	
	\begin{definition}[{\cite[Definition~2.4.7.2]{Lur09}}]\label{def:bifib}
		A map of simplicial sets $e\colon X\to S\times T$ is called a \textit{bifibration} provided that it is an satisfies the following criterion: for all positive integers $m$, given the solid part of the diagram
	 	\begin{equation*}
		\begin{tikzcd}
		\Lambda_i^m\arrow[d,hook] \arrow[r] & X\arrow[d,"e"]\\
		\Delta^m \arrow[ru,dashed]\arrow[r,"s"] & S\times T
		\end{tikzcd}
		\end{equation*}
		the dashed morphism renders the entire diagram commutative provided that
		\begin{enumerate}
			\setcounter{enumi}{-1}
			\item $0<i<m$, i.e. the $e$ is an inner fibration,
			\item $i=0$ and $\pi_T\circ s$ maps $\Delta^{\{0,1\}}\subset \Delta^n$ to a degenerate 1-simplex in $T$,
			\item $i=m$ and $\pi_S\circ s$ maps $\Delta^{\{m-1,m\}}\subset \Delta^m$ to a degenerate 1-simplex in $S$.
		\end{enumerate}
	\end{definition}

The prototypical example of a bifibration is the map-bifibration. For an $\infty$-category $\C$, it is given by
\begin{equation*}
\mathrm{eval}_0\times\mathrm{eval}_1\colon \map(\Delta^1,\C)\to \C\times \C
\end{equation*}
mapping an arrow to its source and target \cite[Corollary~2.4.7.11]{Lur09}. If $\C$ is exact, we will think of a $0$-extension in $\C$ as a map in $\C$. Henceforth, we denote $\Extcat^0(\C,\C)\coloneqq \map(\Delta^1,\C)$, and the mapping space bifibration by
\begin{equation*}
\begin{tikzcd}
	\Extcat^0(\C,\C) \arrow[r,"\epsilon_0"]& \C\times \C
\end{tikzcd}
\end{equation*}
Fixing two objects $a,b\in \C$, the fibre $\varepsilon_0^{-1}(b,a)$ is then the Kan complex of maps from $b$ to $a$. In general, given a bifibration $e\colon \to S\times T$, the fibres $e^{-1}(b,a)$ are always Kan complexes. Better still, any such bifibration determines a bifunctor
\begin{equation*}
e\colon \C\op\times \C \to \kan
\end{equation*}
into the $\infty$-category of Kan complexes \cite[Lemma 5.1.2]{AF20}. In particular, the mapping space bifunctor
\begin{equation*}
\map\colon \C\op\times\C\to \kan
\end{equation*}
is determined by $\varepsilon_0$.
	
Next, we define a bifibration yielding 1-extensions. Recall that~$[1]\times [1]$ consists of two~$2$-simplices glued along their~$1$-faces to obtain a square. A short exact sequence is thus a homotopy coherent diagram~$\mathbb{E}\colon [1]\times [1]\rightarrow \C$ which defines a bicartesian square
	\begin{center}
		\begin{tikzcd}
			a\arrow[r,"i",tail]\arrow[d,"",two heads]\arrow[dr, phantom, "\square"] & e\arrow[d,"p",two heads] \\
			0\arrow[r,"",tail] & b
		\end{tikzcd}
	\end{center}
The corners in~$[1]\times [1]$ are commonly labeled~$(k,l)$, where ~$k,l=0,1$. Define the $\infty$-category $\Extcat_{\C}^1(\C,\C)$ as the full subcategory of $\map(\Delta^1\times\Delta^1,\C)$ spanned by the short exact sequences. Every inclusion \\ $i_{l,k}\colon [0]\to {[1]}\times [1]$ induces a map~$\mathrm{eval}_{k,l}\colon \map([1]\times[1],\C)\to \C$ which restricts to~\[\mathrm{eval}_{k,l}\colon \Extcat^1_{\C}(\C,\C)\to \C.\] In particular, a short exact sequence
	\begin{center}
		\begin{tikzcd}
			\mathbb{E}\colon &  a \arrow[r, tail] & e \arrow[r, two heads] & b                           
		\end{tikzcd}
	\end{center}
	satisfies~$\mathrm{eval}_{0,0}\mathbb{E}=a$,~$\mathrm{eval}_{0,1}\mathbb{E}=0$,~$\mathrm{eval}_{1,0}\mathbb{E}=e$ and~$\mathrm{eval}_{1,1}\mathbb{E}=b$. Now define
	\begin{equation*}
	\epsilon_1\coloneqq \mathrm{eval}_{0,0}\times\mathrm{eval}_{1,1}\colon \Extcat_{\C}^1(\C,\C) \to \C\times\C.
	\end{equation*}
	Our next result, where we show that $\epsilon_1$ is a bifibration, implies that~$\Extcat^1_{\C}(b,a)$ does in fact define a bifunctor
	\[
	\Extcat^1_\C(-,-)\coloneqq \epsilon_1^{-1}\colon \C^{\mathrm{op}}\times \C \to \kan
	\]
	into the $\infty$-category of Kan complexes.
	
	\begin{proposition}\label{prop:e1bifib}
		\label{proposition:bifibration}
		The map~$\epsilon_1\colon \Extcat^1_{\C}(\C,\C)\to  \C\times \C$ is a bifibration.
	\end{proposition}
	
	\begin{proof}
	We first show that $\epsilon_1$ is an inner fibration. As $\Extcat_{\C}^1(\C,\C)$ is a subcategory of $\map(\Delta^1\times\Delta^1,\C)$, the inclusion 
	\begin{equation*}
	\Extcat_{\C}^1(\C,\C)\embed \map(\Delta^1\times\Delta^1,\C)
	\end{equation*}
	is an inner fibration by definition.
	Moreover, if we identify $\C\times\C$ with $\map(\ast \sqcup \ast,\C)$,
	we have an inner fibration \cite[\href{https://kerodon.net/tag/01BU}{Tag 01BU}]{Ker}
	\begin{equation*}
	\map(\Delta^1\times\Delta^1,\C) \to \map(\ast \sqcup \ast,\C) \cong \C\times\C.
	\end{equation*}
	A composite of inner fibrations is still an inner fibration \cite[\href{https://kerodon.net/tag/01BH}{Tag 01BH}]{Ker}, whence we can complete the first step of the proof by recalling that $\epsilon_1$ is the composite of the two inner fibrations displayed above.
	
	Suppose that the solid part of the following diagram commutes
		\begin{center}
			\begin{tikzcd}
				{\Lambda^m_0} \arrow[r, "\gamma"] \arrow[d] & \Extcat^1_{\C}(\C,\C) \arrow[d,"\epsilon_1"'] \\
				{\Delta^m}\arrow[r, "s"]\arrow[ru,dashed] & \C\times \C
			\end{tikzcd}
		\end{center}
		where the first~$1$-simplex in~$s$ has a degenerate~$1$-simplex as its second component. For $\epsilon_1$ to be a bifibration, the dashed morphism should exist, rendering the whole diagram commutative.
		There is also a dual horn filling requirement for~$\Lambda^m_m$ which is proved by dualising the reasoning in the next paragraphs.
		
		The case~$m=1$ reduces to completing a diagram
		\begin{center}
			\begin{tikzcd}
				a_0 \arrow[r, tail,"i"] \arrow[d,"f"]& e \arrow[r, two heads] & b\arrow[d, equal] \\
				a_1 & & b
			\end{tikzcd}
		\end{center}
		to a map of exact sequences (Definition~\ref{def:mapofseq}). We need only compute the pushout of $i$ along $f$ and invoke Lemma~\ref{Buh2.12}. The 1-simplex in $\Extcat_{\C}^1(\C,\C)$ given by the dashed morphism will be
		\begin{center}
			\begin{tikzcd}
				a_0 \arrow[r, tail,"i"] \arrow[d,"f"]\arrow[dr, phantom, "\square"]& e \arrow[r, two heads] \arrow[d]& b\arrow[d, equal] \\
				a_1\arrow[r, tail] & p \arrow[r, two heads] & b
			\end{tikzcd}
		\end{center}
		
		The case~$n\geq 2$ is also a consequence of Lemma~\ref{Buh2.12}. Indeed, the first~$1$-simplex in~$\gamma$ is of the form
		\begin{center}
			\begin{tikzcd}
				a_0 \arrow[r, tail] \arrow[d]& e_0 \arrow[r, two heads] \arrow[d]& b_0\arrow[d, equal] \\
				a_1\arrow[r, tail] & e_1 \arrow[r, two heads] & b_0
			\end{tikzcd}
		\end{center}
		whence the leftmost square is a pushout square due to Lemma~\ref{Buh2.12}. A lift of~$s$ to~$\Extcat^1_{\C}(\C,\C)$ can now be constructed using the fact that pushout squares are initial in the category of cocones.
	\end{proof}

	
	

	\begin{remark}
	For each pair of objects $a,b\in \C$, we deduce from \Cref{prop:e1bifib} that $\Extcat_{\C}^1(b,a)$ is a Kan complex. One can also prove this assertion using the Five Lemma (\Cref{5lem}). Indeed, the Five Lemma proves that all maps in $\Extcat_{\C}^1(b,a)$ are homotopy equivalences, whence is it a Kan complex \cite[Corollary~1.4]{Joy02}. 
	\end{remark} 
	
	The higher extension bifibrations are defined by compositing the bifibration $\epsilon_1$ with itself. Ayala--Francis devise composition procedure for bifibrations, rephrased here as \Cref{lem:AF20.5.2.1}. It uses the notion of localisation \cite[§7.1]{Cis19} \cite{DK80s,DK80c}. The \textit{Dwyer--Kan localisation} of a simplicial set ${X}$ with respect to a set of maps ${W}\subseteq {X}$ is given by the map $\ell_{W}$ in the following diagram
	\begin{equation*}
	\begin{tikzcd}
	{W} \arrow[d]\arrow[r,hook]\arrow[rd,phantom,"\ulcorner",very near end] & {X}\arrow[dd,bend left=49,"\ell_{{W}}"]\arrow[d] \\
	\Ex^{\infty}{W}  \arrow[r] & X'\arrow[d,"r"] \\
	 & {X}[{W}^{-1}]
	\end{tikzcd}
	\end{equation*}
	where the square is a pushout, and the map $r$ is a fibrant replacement in the Joyal model structure on simplicial sets. (Note that if $X$ is an $\infty$-category, one can choose $r$ to be the identity.) 
	Here, $\Ex^{\infty}$ is the transfinite composite of Kan's Ex-functor, which is the right adjoint of the barycentric subdivision functor  \cite{Kan57}. It is well-known that $\Ex^{\infty}$ is an endofunctor on the $\infty$-category of simplicial sets with essential image in the $\infty$-category of Kan complexes. Moreover, the unit $\upsilon\colon X\to \Ex^{\infty}X$ is a natural trivial cofibration in the category of simplicial sets. 
	 	
	\begin{lemma}[{\cite[Lemma 5.2.1]{AF20}}]\label{lem:AF20.5.2.1}
		Let $X_{01}\to \C\times \mathscr{I}$ and $X_{12}\to \mathscr{I} \times \mathscr{D}$ be bifibrations. Consider the pullback square
		\begin{equation}\label{eq:AF20.5.2.1}
		\begin{tikzcd}
		X_{012}\arrow[phantom,rd,very near start,"\lrcorner"] \arrow[r]\arrow[d] & X_{01}\arrow[d] \\
		X_{12} \arrow[r]  & \mathscr{I}
		\end{tikzcd}
		\end{equation}
		and the localisation \cite[§7.1]{Cis19} $X_{02}\coloneqq X_{012}[W^{-1}]$, where $W$ is the class of maps that are sent to invertible maps by both $X_{012}\to \C$ and $X_{012}\to \mathscr{D}$. Then the map $X_{02}\to \C\times\mathscr{D}$ is a bifibration.
	\end{lemma}

	In any context where \Cref{lem:AF20.5.2.1} applies, we call the maps in $W$ \textit{weak equivalences}. Denoting the composition operation of bifibrations by $\lozenge$, we define the extension bifunctor in degree $n$ by 
	\begin{equation*}
	\epsilon_n\coloneqq (\epsilon_1)^{\lozenge n} \colon \Extcat_{\C}^n(\C,\C) \to \C\times \C,
	\end{equation*}
	i.e. the $n$-th power of $\epsilon_1$. By \Cref{lem:AF20.5.2.1}, we can be certain that they are indeed bifibrations, whence we have bifunctors
	\begin{equation*}
	\Extcat_{\C}^n \coloneqq \epsilon_n^{-1} \colon \C\op\times\C\to \kan.
	\end{equation*}

	We can regard the objects in~$\Extcat^n_{\C}(\C,\C)$ as $n$ conjoined short exact sequences, as displayed below.
	\begin{equation}
\label{eq:compdia}
\begin{tikzcd}[scale cd= 0.8, column sep=0.75em]
& e_1 \arrow[rd, "p_1", two heads] &                                                       & e_2  \arrow[rd, "p_2", two heads] &                                                       & \cdots  \arrow[rd, "p_{n-2}", two heads] &                                                               & e_{n-1} \arrow[rd, "p_{n-1}", two heads] &                                                           & e_n \arrow[rd, "d_n", two heads] &   \\ a\arrow[rr, phantom, "\diamond"] \arrow[ru, "d_0", tail] \arrow[rd, two heads] &                                                    & e_{1.5}\arrow[rr, phantom, "\diamond"] \arrow[ru, "i_1", tail] \arrow[rd, two heads] &                                                    & e_{2.5}\arrow[rr, phantom, "\cdots"] \arrow[ru, "i_2", tail] \arrow[rd, two heads] &                                                               & e_{(n-2).5}\arrow[rr, phantom, "\diamond"] \arrow[ru, "i_{n-2}", tail] \arrow[rd, two heads] &                                                                & e_{(n-1).5}\arrow[rr, phantom, "\diamond"] \arrow[ru, "i_{n-1}", tail] \arrow[rd, two heads] &                                  & b \\
& 0 \arrow[ru, tail]                                 &                                                       & 0 \arrow[ru, tail]                                 &                                                       & \cdots \arrow[ru, tail]                                       &                                                               & 0 \arrow[ru, tail]                                             &                                                           & 0 \arrow[ru, tail]               &  
\end{tikzcd}
	\end{equation}
	As a matter of notation, we will omit the zero objects, thus deleting the bottom row. A map~$f\colon \mathbb{E}\to \mathbb{F}$ of such~$n$--extensions, or~$1$-simplex in~$\Extcat^n_{\C}(b,a)$, is then given by a homotopy coherent diagram
	\begin{equation}\label{eq:mapofnext}
		\begin{tikzcd}[scale cd= 0.8, row sep=1em]
			& e_1 \arrow[rd, "p^e_1", two heads] \arrow[dd, "\varepsilon_1"] &                                                   & e_2 \arrow[rd, "p^e_2", two heads] \arrow[dd, "\varepsilon_2"] &                                & e_n \arrow[rd, "d^e_n", two heads] \arrow[dd, "\varepsilon_n"] &              \\
			a_0\arrow[ru, "d^e_0"] \arrow[dd,"\alpha"] &                                                      & e_{1.5} \arrow[ru, "i^e_1"] \arrow[dd, "\varepsilon_{1.5}", near start] &                                                      & \cdots \arrow[ru, "i^e_{n-1}"] &                                                      & b_0 \arrow[dd,"\beta"] \\
			& f_1 \arrow[rd, "p^f_1", two heads]                   &                                                   & f_2 \arrow[rd, "p^f_2", two heads]                   &                                & f_n \arrow[rd, "d^f_n", two heads]                   &              \\
			a_1 \arrow[ru, "d^f_0"]            &                                                      & f_{1.5} \arrow[ru, "i^f_1"]                       &                                                      & \cdots \arrow[ru, "i^f_{n-1}"] &                                                      & b_1          
		\end{tikzcd}
	\end{equation}
	Such a map is a weak equivalence provided that $\alpha$ and $\beta$ are invertible.
	
	We should justify why our construction generalises MacLane's definition of extension categories for exact 1-categories. For any exact $\infty$-category $\C$, define $\classExtcat^n_\C(\C,\C)$ by $\Extcat^n_{\C}(\C,\C)$ for $n\leq 1$. The higher classical extension categories are recursively defined as pullbacks:
	\begin{equation*}
	\classExtcat^n_\C(\C,\C) \coloneqq \classExtcat^{n-1}_\C(\C,\C) \times_\C \classExtcat^1_\C(\C,\C)
	\end{equation*}
	Given $a,b\in\C$, one defines $\classExtcat_{\C}^n(b,a)$ in a similar manner as $\Extcat_{\C}^n(b,a)$. If $\C$ is the nerve of an exact 1-category, then $\classExtcat_{\C}^n(b,a)$ becomes equivalent to a classical extension category.
	
	We point out that $\Extcat^n_{\C}(b,a)$ will not be equivalent to MacLane's extension category $\classExtcat_{\C}^n(b,a)$ in general. In MacLane's setup, the higher extension categories (i.e. for $n\geq 2$) need not be Kan complexes. However, we will show in \Cref{proposition:is ClassicalExt} that $\Extcat^n_{\C}(b,a)$ is homotopy equivalent to $\classExtcat_{\C}^n(b,a)$. 

 	Let $a,b\in\C$. Then we can induce a map $\classExtcat^n_{\C}(b,a) \to^{\widehat{\ell}} \Extcat_{\C}^n(b,a)$ as a pullback of the natural map $\ell\colon \classExtcat^n_{\C}(\C,\C) \to \Extcat^n_{\C}(\C,\C)$. 
 	\begin{equation}\label{eq:ellandellhat}
 	\begin{tikzcd}
 	{\classExtcat^n_{\C}(b,a)} \arrow[d, dashed,"\widehat{\ell}"] \arrow[rr,hook]         &  & {\classExtcat^n_{\C}(\C,\C)} \arrow[d,"\ell"] \\
 	{\Extcat^n_{\C}(b,a)} \arrow[rrd,phantom,"\lrcorner",very near start] \arrow[rr,"\overline{(b,a)}",hook] \arrow[d] \arrow[rrd, phantom] &  & {\Extcat^n_{\C}(\C,\C)} \arrow[d]  \\
 	\Delta^0 \arrow[rr,"{(b,a)}",hook]                                            &  & \C\times\C                         
 	\end{tikzcd}
 	\end{equation}
 	Since the lower square and outer rectangle are cartesian, so is the upper square.

	\begin{proposition}
		\label{proposition:is ClassicalExt}
		If $\C$ is the nerve of an exact 1-category, then the map $\widehat{\ell}$ in \eqref{eq:ellandellhat} is a weak equivalence.
	\end{proposition}

A proof of \Cref{proposition:is ClassicalExt} will be given in the next section. One key ingredient will be the following result, where we show that extension categories of stable $\infty$-categories are mapping spaces.

\begin{proposition}\label{lem:weqHomnExtnSt}
	Suppose that $\D$ is an exact $\infty$-category admitting finite colimits,
	and let $\Sigma$ denote the suspension endofunctor on $\D$. Let $\map_{\D}(\D,\Sigma^n\D)$ denote the full subcategory of $\map(\Delta^1,\D)$ spanned by objects of the form $b\to \Sigma^n a$.
	For every $n\geq 0$, we have a weak homotopy equivalence of bifibrations
	\begin{equation}\label{eq:weqHomnExtnSt}
	\begin{tikzcd}
	{\Extcat^n_{\D}(\D,\D)} \arrow[rr, "\phi"] \arrow[rd, "\epsilon_n"'] &             & {\map_{\D}(\D,\Sigma^n\D)} \arrow[ld, "\mathrm{eval}_0\times \mathrm{eval}_1"] \\
	& \D\times \D &                                                                               
	\end{tikzcd}
	\end{equation}
	In particular, such weak homotopy equivalences exist if $\D$ is stable.
\end{proposition}
\begin{proof} 
	We have defined ${\map_{\D}(\D,\Sigma^n \D)}$ as a pullback, as displayed on the right in the following diagram
	\begin{equation}\label{eq:prweqHomnExtnSt}
	\begin{tikzcd}[column sep=3em]
	{\Extcat_{\D}^n(\D,\D)} \arrow[rd, "\epsilon_n"', bend right] \arrow[rr, "c", bend left] \arrow[r, "\phi", dashed] & {\map_{\D}(\D,\Sigma^n \D)} \arrow[d, "\mathrm{eval}_0\times \mathrm{eval}_1"'] \arrow[r, hook] \arrow[rd, phantom,"\lrcorner",very near start] & {\map(\Delta^1,\D)} \arrow[d, "\mathrm{eval}_0\times\mathrm{eval}_1"] \\
	& \D\times \D \arrow[r, "1\times \Sigma^n"]                                                                           & \D\times \D                                                          
	\end{tikzcd}
	\end{equation}
	where $c$ takes the cofibre of the last map of an $n$-extension.
	We induce $\phi$, which is clearly a map of bifibrations. The proof is completed using Quillen's Theorem A. Given an object $b\to^{f}\Sigma^n a$ in $\map_{\D}(\D,\Sigma^n \D)$, displayed to the right below
	\begin{equation*}
	\begin{tikzcd}
		& 0 \arrow[rd, two heads] \arrow[dd] &                                               & 0 \arrow[rd, two heads] \arrow[dd] &                                                 & \dots \arrow[rd, two heads]  &                                                    & e_n \arrow[rd, two heads] \arrow[dd] \arrow[rddd, "\square", phantom] &                   \\
		a \arrow[ru, tail] \arrow[dd, equal] &                                    & \Sigma a \arrow[ru, tail] \arrow[dd, equal] &                                    & \Sigma^2 a \arrow[ru, tail] \arrow[dd, equal] &                              & \Sigma^{n-1}a \arrow[ru, tail] \arrow[dd, equal] &                                                                       & b \arrow[dd, "f"] \\
		& 0 \arrow[rd, two heads]            &                                               & 0 \arrow[rd, two heads]            &                                                 & \cdots \arrow[rd, two heads] &                                                    & 0 \arrow[rd, two heads]                                               &                   \\
		a \arrow[ru, tail]                     &                                    & \Sigma a \arrow[ru, tail]                     &                                    & \Sigma^2 a \arrow[ru, tail]                     &                              & \Sigma^{n-1}a \arrow[ru, tail]                     &                                                                       & \Sigma^n a       
	\end{tikzcd}
	\end{equation*}
	one constructs the $n$-extension $\mathbb{E}$ displayed along the top row. Then $\mathbb{E}$ is sent to $f$ by $\phi$. Hence, we have identified a terminal object in the comma category $\phi\downarrow f$, completing the proof.
\end{proof}

Lastly in this section, we prove a useful and intuitively sensible result that will be helpful in later sections.

\begin{proposition}\label{prop:funcExtn}
	An exact functor 
	\begin{equation*}
	F\colon \mathscr{C} \to \mathscr{D}
	\end{equation*}
	induces maps of bifibrations
	\begin{equation}\label{eq:ExtF}
	\Extcat^nF\colon \Extcat^n_{\C}(\C,\C)\to \Extcat^n_{\mathscr{D}}(\mathscr{D},\mathscr{D})
	\end{equation}
	for every $n\geq 0$.
\end{proposition}
\begin{proof}
If $n=0$, we have defined $\Extcat_{\C}^0(\C,\C)=\map(\Delta^1,\C)$, whence we can induce
\begin{equation*}
\map(\C,\C) = \map(\Delta^1,\C) \to \map(\Delta^1,\mathscr{D})= \map(\mathscr{D},\mathscr{D}).
\end{equation*}
For $n=1$, we use a similar strategy; note that the exactness of $F$ implies that the induced functor
\begin{equation*}
\map(\Delta^1\times \Delta^1,\C) \to \map(\Delta^1\times \Delta^1,\mathscr{D}) 
\end{equation*}
restricts to the subcategories spanned by the short exact sequences. 

We complete the proof by induction. Consider the pullback squares
\begin{equation*}
\begin{tikzcd}
\mathscr{E}^n_{\mathscr{X}} \arrow[r]\arrow[d]\arrow[rd,phantom,"\lrcorner",very near start] & \Extcat^1_{\mathscr{X}}(\mathscr{X},\mathscr{X})\arrow[d] \\
\Extcat^{n-1}_{\mathscr{X}}(\mathscr{X},\mathscr{X})\arrow[r] & \mathscr{X}
\end{tikzcd}
\end{equation*}
where $\mathscr{X}$ can be either $\C$ or $\mathscr{D}$. A map $\mathscr{E}^n_{\C}\to \mathscr{E}^n_{\mathscr{D}}$ is induced by the universal property. Since this preserves weak equivalences (indeed, the functor $F$ preserves homotopy equivalences), we induce a map
\begin{equation*}
\Extcat_{\C}^n(\C,\C) \to \Extcat_{\mathscr{D}}^n(\mathscr{D},\mathscr{D}),
\end{equation*}
as desired.
\end{proof}

\section{Retakh spectra for exact $\infty$-categories}\label{sec:retakh}

In this section, we generalise the following theorem of Retakh to the setting of exact $\infty$-categories. 

\begin{theorem}[\cite{Ret86}]\label{thm:Ret86}
	Let $\C$ be (the nerve of) an exact 1-category, and let $a$ and $b$ be objects in $\C$. Then the spaces $\{\classExtcat^n_{\C}(b,a)\}_{n\geq 0}$ form an $\Omega$-spectrum, which is to say that we have weak equivalences
	\begin{equation}\label{eq:Ret86}
	R_n\colon \classExtcat^{n+1}_{\C}(b,a) \to \Omega \classExtcat^{n}_{\C}(b,a)
	\end{equation}
	for all $n\geq 0$.
\end{theorem}

Given an exact $\infty$-category $\C$, the $\Omega$-spectra $\{\Extcat^n_{\C}(b,a)\}_{n\geq 0}$ will be referred to as \textit{Retakh spectra}. 
We first address the special case of stable $\infty$-categories, where the extension $\infty$-categories are determined by hom-spaces (see \Cref{lem:weqHomnExtnSt}). 

\begin{lemma}\label{lem:LurieHomSpec}
	Let $\D$ be an exact $\infty$-category admitting finite colimits (for example a stable $\infty$-category), and let $a$ and $b$ be objects in $\D$. Then the Kan complexes $\{\Extcat^n_{\D}(b,a)\}$ form an $\Omega$-spectrum.
\end{lemma}
\begin{proof}
	By \Cref{lem:weqHomnExtnSt}, we need to establish the existence of weak equivalences
	\begin{equation}\label{eq:LurieHomSpec}
	\map_{\D}(b,\Sigma^{n+1} a) \to \Omega \map_{\D}(b,\Sigma^n a)
	\end{equation}
	These weak equivalences exist, since $\D$ admits finite colimits \cite[p. 24]{Lur17}.
\end{proof}

The proof of \Cref{lem:LurieHomSpec} suggests that \Cref{thm:Ret86} can be strengthened. Indeed, the equivalences \eqref{eq:LurieHomSpec} are natural in both components. We will set out to prove that any exact $\infty$-category $\C$ determines a functor
\begin{equation*}
\C\op\times\C\to \Sp,
\end{equation*}
where the $n$-th component is weakly equivalent to
\begin{equation*}
\Extcat^n_{\C}\colon \C\op\times \C \to \kan.
\end{equation*}
Here, the co-domain $\Sp$ is the $\infty$-category of spectrum objects in Kan complexes \cite[Section~8]{Lur06}. The existence of such a bifunctor implies that \Cref{thm:Ret86} can be generalised to exact $\infty$-categories, and moreover that the maps $R_n$ in \eqref{eq:Ret86} will be natural in both components.

The remainder our this section is devoted to proving our main theorem, which is stated presently.

\begin{theorem}\label{thm:SpExtbifun}
	Let $\C$ be an exact $\infty$-category. There exists a functor
	\begin{equation}\label{eq:SpExtbifun}
	\Extcat_{\C}\colon\C\op\times\C\to \Sp
	\end{equation}
	such that the $n$-th component of the spectrum $\Extcat_{\C}(b,a)$ is weakly equivalent to the extension $\infty$-category $\Extcat^n_{\C}(b,a)$ for all $a,b\in \C$.
\end{theorem}

The following corollaries will be immediately follow. 

\begin{corollary}\label{thm:IsOmegaSpectrum}
	Let $\C$ be an exact $\infty$-category.
	For every pair~$(b,a)$ of objects in $\C$, the sequence
	\begin{equation*}
	\Extcat_\C(b,a) \coloneqq \{\Extcat^n_{\C}(b,a)\}_{n\geq 0}
	\end{equation*}
	is an $\Omega$-spectrum.
\end{corollary}

\begin{corollary}
	The~$(-n)$-th homotopy group of~$\Extcat_\C(b,a)$ is naturally isomorphic to $\pi_0 \Extcat_\C^n (b,a)$.
\end{corollary}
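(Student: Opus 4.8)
The plan is to obtain this as a formal consequence of \Cref{theorem:IsOmegaSpectrum} together with the definition of the homotopy groups of a spectrum. Recall that for a spectrum $E$ with spaces $E_0, E_1, E_2, \dots$ and structure maps $E_k \to \Omega E_{k+1}$, the $m$th homotopy group is the filtered colimit
\[
\pi_m E \;=\; \colim_k\, \pi_{m+k} E_k,
\]
the transition maps being $\pi_{m+k}E_k \to \pi_{m+k}\Omega E_{k+1} \cong \pi_{m+k+1} E_{k+1}$ induced by the structure maps. Here each $\pi_j E_k$ is computed after Kan fibrant replacement; for $E_k = \Extcat^k_\C(B,A)$ one may take $\Extcat^0_\C(B,A) = \hom_\C(B,A)$ itself, the Kan complex $\Extcat^1_\C(B,A)$ of \Cref{proposition:Ext1IsKan}, or $\fEx\Extcat^k_\C(B,A)$ for $k \ge 2$.

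First I would invoke \Cref{theorem:IsOmegaSpectrum}: every Retakh map $R_k \colon \Extcat^k_\C(B,A) \to \Omega\Extcat^{k+1}_\C(B,A)$ is a weak equivalence, and hence induces an isomorphism on $\pi_j$ for all $j \ge 0$ (and a bijection on $\pi_0$). Consequently every transition map in the colimit system above is an isomorphism, so for $m = -n$ the colimit is already computed at the stage $k = n$, giving
\[
\pi_{-n}\Extcat_\C(B,A) \;\cong\; \pi_0 \Extcat^n_\C(B,A),
\]
which is the assertion; for $n \ge 1$ this also yields the isomorphisms \eqref{eq:hermanniso} of the introduction.

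There is essentially no serious obstacle here --- the argument is a short formal deduction. The only points that need care are bookkeeping of indexing conventions, namely that the $n$th entry of the spectrum sits in homological degree $-n$ so that the transition maps in the colimit have all become isomorphisms once one reaches stage $n$, and the observation that $\pi_0\Extcat^n_\C(B,A)$ is meaningful: it is the set of homotopy classes of objects of the quasi-category $\Extcat^n_\C(B,A)$ (fibrantly replaced when $n \ge 2$), and it automatically carries an abelian group structure as a homotopy group of a spectrum.
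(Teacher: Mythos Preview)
Your proposal is correct and matches the paper's treatment: the corollary is stated without proof immediately after \Cref{theorem:IsOmegaSpectrum}, as a direct formal consequence of the $\Omega$--spectrum property. Your write-up simply spells out the standard colimit definition of stable homotopy groups and the stabilization that occurs for $\Omega$--spectra, which is exactly the intended deduction.
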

To show that \Cref{thm:SpExtbifun} holds, we will prove a stronger statement which better reflects our proof strategy.

\begin{proposition}\label{lem:weqHomnExtn}
	Let $\C$ be an exact $\infty$-category. There exists an exact $\infty$-category $\D$, admitting finite colimits, and an exact embedding $\C\embed \D$ inducing weak homotopy equivalences (see \Cref{prop:funcExtn})
	\begin{equation}\label{eq:weqHomnExtn}
	\Extcat^n_{\C}(\C,\C) \to \Extcat^n_{\D}(\C,\C) =\map_{\D}(\C,\Sigma^n \C), 
	\end{equation}
	where $\Extcat^n_{\D}(\C,\C)$ is defined by the pullback
	\begin{equation}\label{eq:ExtDCC}
	\begin{tikzcd}
		\Extcat^n_{\D}(\C,\C) \arrow[rd,phantom,"\lrcorner",very near start]\arrow[d]\arrow[r,hook] & \Extcat^n_{\D}(\D,\D)\arrow[d] \\
		\C\times\C \arrow[r,hook] & \D\times\D
	\end{tikzcd}
	\end{equation}
	and $\map_{\D}(\C,\Sigma^n \C)$ is the full subcategory of $\map_{\D}(\D,\Sigma^n \D)$ (see \eqref{eq:prweqHomnExtnSt}) spanned by the objects $b\to \Sigma^n a$, where $a,b\in\C$. 
\end{proposition}

Once \Cref{lem:weqHomnExtn} is shown to hold, one readily proves \Cref{thm:SpExtbifun} using \Cref{lem:weqHomnExtnSt}. Indeed, if $\D$ admits finite colimits, we can take the bifunctor $\C\op\times\C\to \Sp$ as the mapping space bifunctor. To find a suitable $\D$, we use the explicit construction of Klemenc' hull \cite{Kle20}.

When $\C$ is the nerve of an exact 1-category $\c$, \Cref{lem:weqHomnExtn} is proven using a classical result of Verdier, stating that the Yoneda extension group $\classExtcat^n_{\c}(b,a)$ is naturally isomorphic to $\hom_{\mathrm{D}^b(\c)}(b,\Sigma^n a)$ \cite[Prop~3.2.2]{Ver96} \cite[A.7, Proposition]{Pos11}, where $\mathrm{D}^b(\c)$ is the bounded derived category of $\c$.

Not only do we rely on the existence of a stable hull to prove \Cref{thm:SpExtbifun}, but also on its structure. More precisely, the exact $\infty$-category $\D$ in \Cref{prop:equivExt} will be a certain subcategory of the stable hull of $\C$. We collect some useful facts on its construction in \Cref{lem:Klemenc}, so that our the proof of \Cref{lem:CintoHstgeq0} makes sense. 

Klemenc' notation should be recalled first. Given an exact $\infty$-category $\C$, let $\mathrm{PSh}(\C)$ be the $\infty$-category of presheaves on $\C$. Let $P_{\Sigma,f}(\C)$ be the smallest subcategory of $\mathrm{PSh}(\C)$ which is closed under finite colimits and contains the image of the Yoneda embedding
\begin{equation*}
\widehat{(-)}\colon \C \embed \mathrm{PSh}(\C),
\end{equation*}
and let $P_{\Sigma}(\C)$ be the full subcategory of $\mathrm{PSh}(\C)$ spanned by the presheaves preserving finite products. Both $P_{\Sigma,f}(\C)$ and $P_{\Sigma}(\C)$ are $\infty$-categories admitting finite colimits. The \textit{Spanier--Whitehead $\infty$-category} of $P_{\Sigma,f}(\C)$, denoted by $\mathcal{SW}P_{\Sigma,f}(\C)$, is defined as the colimit of the following digram of $\infty$-categories
\begin{equation*}
\begin{tikzcd}
P_{\Sigma,f}(\C) \arrow[r,"\Sigma"] & P_{\Sigma,f}(\C) \arrow[r,"\Sigma"] & \cdots
\end{tikzcd}
\end{equation*}
where the arrows denote the suspension endofunctor on $P_{\Sigma,f}$. 
We have an embedding $P_{\Sigma,f}(\C)\embed \mathcal{SW}P_{\Sigma,f}(\C)$.
Klemenc defines primitive acyclic objects as certain colimits in $P_{\Sigma,f}(\C)$ constructed from short exact sequences in $\C$ \cite[Definition 3.7]{Kle20}. In $\mathcal{SW}P_{\Sigma,f}(\C)$, an \textit{acyclic object} is one that is contained in the stable closure of the primitive acyclic objects. A \textit{quasi-isomorphism} in $\mathcal{SW}P_{\Sigma,f}(\C)$ admits an acyclic cofibre.

\begin{lemma}\label{lem:Klemenc}
	Let $\C$ be an exact $\infty$-category and let $P_{\Sigma,f}(\C)$, $P_{\Sigma}(\C)$, and $\mathcal{SW}P_{\Sigma,f}(\C)$ be defined as in the paragraph above.
	\begin{enumerate}
		\item \cite[Theorem 1.2]{Kle20} The stable hull $\Hst(\C)$ of $\C$ can be defined as the localisation of $\mathcal{SW}P_{\Sigma,f}(\C)$ with respect to the quasi-isomorphisms.
		\item \cite[Proof of Proposition 4.7]{Kle20} The homotopy category $\homcat \Hst(\C)$ admits a left calculus of fractions.
	\end{enumerate}
\end{lemma}

Write $\mathcal{H}^{\mathrm{st}}_{\geq 0}(\C)$ for the localisation of $P_{\Sigma,f}(\C)$ in the quasi-isomorphisms.
This is an exact $\infty$-category in which maps onto zero objects are cofibrations. It will be shown in \Cref{prop:equivExt} that we can set $\D=\mathcal{H}^{\mathrm{st}}_{\geq 0}(\C)$ 
in \Cref{lem:weqHomnExtn}. We will need to adapt a lemma of Klemenc'.

\begin{lemma}\label{lem:CintoHstgeq0}
	Let $\C$ be an exact $\infty$-category and let $\Hst(\C)$ be its stable hull. For any fibration $E\epi^{p}\widehat{b}$ in $\mathcal{H}^{\mathrm{st}}_{\geq 0}(\C)$, where $b$ is an object in $\C$, there exist a map $\widehat{y_\ast}\to^{t} E$ in $\mathcal{H}^{\mathrm{st}}_{\geq 0}(\C)$,
	where $y_\ast$ is an object in $\C$ and the composite $\widehat{y_\ast}\epi^{pt}\widehat{b}$ is a fibration in $\C$.
\end{lemma}
\begin{remark}\label{rem:CintoHstgeq}
	The proof of \Cref{lem:CintoHstgeq0} closely resembles Klemenc' proof that an exact $\infty$-category is extension closed in its stable hull \cite[Proposition~4.25]{Kle20}. There will be a subtle departure in the final steps. For the convenience of the reader, our proof is carefully spelled out in full. 
\end{remark}
\begin{proof}[Proof of \Cref{lem:CintoHstgeq0}]
	Let $K\to^{j} E $ be the fibre of $p$ in $\mathcal{H}_{\geq 0}^{\mathrm{st}}(\C)$. 
	In the homotopy category $\homcat\mathcal{H}^{\mathrm{st}}_{\geq 0}(\C)$, we can represent the fibre sequence 
	\begin{equation*}
	\begin{tikzcd}
	K\arrow[r,"j"] & E\arrow[r,"p"] & \widehat{b}
	\end{tikzcd}
	\end{equation*}
	in terms of the left calculus of fractions as follows:
	\begin{equation*}
	\begin{tikzcd}
	K \arrow[rd, "j'"', dashed] \arrow[r, "j"] & E' \arrow[r]                                     & \cofibre(j) \arrow[d, "\sim"',"f"] \\
	& E \arrow[rd, dashed,"p"'] \arrow[u, "\sim"] \arrow[r] & B                              \\
	&                                                  & \widehat{b} \arrow[u, "\sim","g"'] 
	\end{tikzcd}
	\end{equation*}
	The dashed arrows represent maps on the $\infty$-categorical level, and the solid arrows are in $P_{\Sigma,f}$. Arrows labeled by $\sim$ are quasi-isomorphisms. Let $ B\to^{d} Q $ be the cofibre of $f$ in $P_{\Sigma,f}$. Since $f$ is a quasi-isomorphism, the object $Q$ is acyclic. Thus, the homotopy group $\pi_0(Q)$ is effaceable \cite[Proposition~3.15]{Kle20}, which is to say that there is a fibration $w\epi^{q} v$ in $\C$ such that $\pi_0(Q)$ is the cokernel of $q$ in $\Mod(\homcat\C)$ under the embedding
	\begin{equation*}
	\overline{(-)}\colon \homcat\C\embed \Mod(\homcat \C).
	\end{equation*}
	Since we have an epimorphism $\overline{v}\epi \pi_0(Q)$ in $ \Mod(\homcat \C)$, there exists a map $b\to^{h} v$ such that the following diagram commutes.
	\begin{equation*}
	\begin{tikzcd}
	&                               & \overline{v} \arrow[d, two heads] \\
	\overline{b} \arrow[r, "\pi_0g"'] \arrow[rru, "\overline{h}", dashed] & \pi_0(B) \arrow[r, "\pi_0d"'] & \pi_0(Q)                         
	\end{tikzcd}
	\end{equation*}
	Let $y_{\ast}\epi^{p_{\ast}} b$ be the pullback in $\C$ of $q$ along $h$. As the composite $d\circ g\circ p_{\ast}$ is null-homotopic, we can perform the following lifting in $P_{\Sigma,f}$:
	\begin{equation*}
	\begin{tikzcd}
	&                               & \cofibre(j) \arrow[d, "f","\sim"'] \\
	\widehat{y_{\ast}} \arrow[r, "\widehat{p}_\ast"',two heads] \arrow[rru, dashed] & \widehat{b} \arrow[r, "g"'] & B                        
	\end{tikzcd}
	\end{equation*}
	Since $K\in \mathcal{H}_{\geq 0}^{\mathrm{st}}(\C)$, there are homotopy classes of maps from $\widehat{b}$ to $\Sigma K$ \cite[Proposition 3.4]{Kle20} (this is where our proof  departs from Klemenc'). Hence a composite of $\widehat{y_\ast} \epi \widehat{b} \to^{g} B$ factors through $E$. 
	We conclude that there exists a commutative square
	\begin{equation}\label{eq:CintoHstgeq0d}
	\begin{tikzcd}
	\widehat{y_\ast} \arrow[d] \arrow[r, two heads,"\widehat{y_{\ast}}"] & \widehat{b} \arrow[d, "\sim"',"g"] \\
	E \arrow[r, two heads]                     & B                            
	\end{tikzcd}
	\end{equation}
	in $\mathcal{H}^{\mathrm{st}}_{\geq 0}(\C)$, whence the proof is complete.
\end{proof}

We will now make the statement in \Cref{lem:weqHomnExtn} more precise, and then prove it.

\begin{lemma}\label{prop:equivExt}
	Let $\C$ be an exact $\infty$-category, and let $\mathcal{H}^{\mathrm{st}}_{\geq 0}(\C)$ be defined as above. Then the inclusion
	\begin{equation}\label{eq:prop:equivExt}
	\Extcat^n_{\C}(\C,\C) \embed \Extcat^n_{\mathcal{H}^{\mathrm{st}}_{\geq 0}(\C)}(\C,\C)
	\end{equation}
	is a weak homotopy equivalence, where the co-domain is defined by the pullback in \eqref{eq:ExtDCC} with $\D=\mathcal{H}^{\mathrm{st}}_{\geq 0}(\C)$.
\end{lemma}
\begin{proof}
	Define $\classExtcat^n_{\mathcal{H}^{\mathrm{st}}_{\geq 0}(\C)}(\C,\C)$ by the pullback shown as the upper right square in the following diagram
	\begin{equation}\label{eq:classExtcatH}
	\begin{tikzcd}
	{\classExtcat^n_{\C}(\C,\C)} \arrow[d] \arrow[r,hook,"j"] \arrow[rd, phantom,"\lrcorner",very near start] & {\classExtcat^n_{\mathcal{H}^{\mathrm{st}}_{\geq 0}(\C)}(\C,\C)} \arrow[d] \arrow[r, hook] \arrow[rd, "\lrcorner", phantom, very near start]                                                   & {\classExtcat^n_{\mathcal{H}^{\mathrm{st}}_{\geq 0}(\C)}(\mathcal{H}^{\mathrm{st}}_{\geq 0}(\C),\mathcal{H}^{\mathrm{st}}_{\geq 0}(\C))} \arrow[d] \\
	\C^{\times (n+1)} \arrow[r,hook]                                         & \C\times\mathcal{H}^{\mathrm{st}}_{\geq 0}(\C)\times \cdots \times \mathcal{H}^{\mathrm{st}}_{\geq 0}(\C)\times \C \arrow[r, hook] \arrow[d] \arrow[rd, "\lrcorner", phantom, very near start] & \mathcal{H}^{\mathrm{st}}_{\geq 0}(\C)^{\times (n+1)} \arrow[d]                                                                                    \\
	& \C\times\C \arrow[r, hook]                                                                                                                                                                     & \mathcal{H}^{\mathrm{st}}_{\geq 0}(\C)\times \mathcal{H}^{\mathrm{st}}_{\geq 0}(\C)                                                               
	\end{tikzcd}
	\end{equation} 
	In the bottom square, the vertical maps project onto the outermost factors. Let $W_{\C}$ denote the subcategory of ${\classExtcat^n_{\C}(\C,\C)}$ spanned by the weak equivalences, and let $W_{\mathcal{H}^{\mathrm{st}}_{\geq 0}(\C)}$ denote the similarly defined subcategory of ${\classExtcat^n_{\mathcal{H}^{\mathrm{st}}_{\geq 0}(\C)}(\C,\C)}$. Then $\Extcat^n_{\mathcal{H}^{\mathrm{st}}_{\geq 0}(\C)}(\C,\C)$ is the localisation of $\classExtcat^n_{\mathcal{H}^{\mathrm{st}}_{\geq 0}(\C)}(\C,\C)$ with respect to $W_{\mathcal{H}^{\mathrm{st}}_{\geq 0}(\C)}$. We show that the map $j$ above is a weak homotopy equivalence which restricts to a weak equivalence  $W_{\C}\to W_{\mathcal{H}^{\mathrm{st}}_{\geq 0}(\C)}$. It will follow that the induced map between the localisations, displayed in \eqref{eq:prop:equivExt}, is a weak homotopy equivalence \cite[6.2]{DK80s}.
	
	Let $E\epi^{p} b$ be a fibration in $\mathcal{H}^{\mathrm{st}}_{\geq 0}(\C)$ with $b\in\C$, and let $y_{\ast}\epi^{p_{\ast}} b$ be the fibration constructed in \Cref{lem:CintoHstgeq0}. Then $p_{\ast}$ is an object in an $\infty$-category $\mathrm{fib}_{\C}(p)$, which is the full subcategory of the over-category of $p$ in $\map(\Delta^1,\mathcal{H}^{\mathrm{st}}_{\geq 0}(\C))$ spanned by fibrations in $\C$ with co-domain $b$ (see \eqref{eq:CintoHstgeq0d}). It will be useful to prove the fact that $p_{\ast}$ is a terminal object in $\mathrm{fib}_{\C}(p)$. This is to say that a simplicial sphere in $\mathrm{fib}_{\C}(p)$ can be filled as long as its last object is $p_{\ast}$ \cite[Definition~4.1]{Joy02}. We first prove the claim for $1$-spheres. A map of fibrations
	\begin{equation*}
	\begin{tikzcd}
	{y'} \arrow[d] \arrow[r, two heads,"p'"] & {b} \arrow[d,equal] \\
	E \arrow[r, two heads,"p"]                     & {b}                           
	\end{tikzcd}
	\end{equation*}
	can be embedded into the solid part of the diagram
	\begin{equation*}
	\begin{tikzcd}
	w \arrow[r, two heads]                                                                       & v \arrow[r]                              & \Sigma K' \arrow[r]                                                              & \Sigma w                            \\
	y_{\ast} \arrow[r, "p_\ast", two heads] \arrow[d] \arrow[u] \arrow[ru, "\square", phantom] & b \arrow[d, equal] \arrow[u] \arrow[r] & \Sigma K' \arrow[r] \arrow[u, equal] \arrow[d] \arrow[rd, "\star", phantom] & \Sigma y_{\ast} \arrow[d] \arrow[u] \\
	E \arrow[r, "p", two heads]                                                                  & b \arrow[d, equal] \arrow[r]           & \Sigma K \arrow[r]                                                               & \Sigma E                            \\
	y' \arrow[r, "p'", two heads] \arrow[uuu,dashed,bend left=61]\arrow[uu,dotted,bend left=30] \arrow[u]                                                      & b                                        &                                                                                  &                                    
	\end{tikzcd}
	\end{equation*}
	where all rows but the bottom are cofibre sequences. By \Cref{Buh2.12}, the square marked by $\star$ is bicartesian.
	Any choice of composite $y'\to b \to  v\to \Sigma K'$ is null-homotopic, as it solves the following pullback problem:
	\begin{equation*}
	\begin{tikzcd}
	y_{\ast} \arrow[rdd, "0"', bend right] \arrow[rrd, "0", bend left] \arrow[rd, dashed] &                                                               &                           \\
	& \Sigma K' \arrow[r] \arrow[d] \arrow[rd, "\star", phantom] & \Sigma y_{\ast} \arrow[d] \\
	& \Sigma K \arrow[r]                                            & \Sigma E                 
	\end{tikzcd}
	\end{equation*}
	We induce the dashed arrow $y' \to w$, and hence the dotted arrow $y'\to y_\ast$ by the universal property of the pullback. A simplicial 1-sphere has been filled. We only relied on universal properties, whence on can use the same type of argument to fill higher dimensional spheres.

	Any object in $\classExtcat^n_{\mathcal{H}^{\mathrm{st}}_{\geq 0}(\C)}(\C,\C)$ is weakly equivalent to an object in $\classExtcat^n_{\C}(\C,\C)$. Indeed, one uses \Cref{lem:CintoHstgeq0} to construct it, as shown in the diagram below. The diagram is supposed to read from right to left, and then from bottom to top. Objects denoted by minuscule letters are in $\C$ and those represented by capital letters need not be.
	In the last step, we use that $\C$ is closed under extensions in $\mathcal{H}^{\mathrm{st}}_{\geq 0}(\C)$; thus $e_1\in \C$.
	\begin{equation*}
	\begin{tikzcd}[row sep=2em, column sep=2em]
	& e_1 \arrow[d] \arrow[rd, two heads]          &                                                                             & \cdots \arrow[rd, two heads]                                 &                                                 & e_{n-1} \arrow[d, equal] \arrow[rd, two heads]                                                      &                                                                      & e_n \arrow[d, equal] \arrow[rd, two heads] &                           \\
	a \arrow[d,equal] \arrow[ru, tail]          & E_1 \arrow[d, equal] \arrow[rd, two heads] & e_{1.5} \arrow[d] \arrow[ru, tail]                                          & \cdots \arrow[rd, two heads]                                 & e_{(n-2).5} \arrow[d, equal] \arrow[ru, tail] & e_{n-1} \arrow[d, equal] \arrow[rd, two heads]                                                      & e_{(n-1).5} \arrow[d, equal] \arrow[ru, tail]                      & e_n \arrow[d, equal] \arrow[rd, two heads] & b \arrow[d, equal]      \\
	a \arrow[d, equal] \arrow[ru, tail] & \vdots \arrow[d, equal]                    & E_{1.5} \arrow[d, equal] \arrow[ru, tail] \arrow[luu, "\square", phantom] & \vdots                                                       & e_{(n-2).5} \arrow[d, equal] \arrow[ru, tail] & \vdots \arrow[d, equal]                                                                             & e_{(n-1).5} \arrow[d, equal] \arrow[ru, tail]                      & \vdots \arrow[d, equal]                    & b \arrow[d, equal]      \\
	\vdots \arrow[d, equal]             & E_1 \arrow[rd, two heads] \arrow[d, equal] & \vdots \arrow[d, equal]                                                   & \cdots \arrow[rd, two heads] \arrow[rdd, "\square", phantom] & \vdots \arrow[d, equal]                       & e_{n-1} \arrow[rd, two heads] \arrow[d, equal]                                                      & \vdots \arrow[d, equal]                                            & e_n \arrow[rd, two heads] \arrow[d, equal] & \vdots \arrow[d, equal] \\
	a \arrow[ru, tail] \arrow[d, equal] & E_1 \arrow[rd, two heads] \arrow[d, equal] & E_{1.5} \arrow[ru, tail] \arrow[d, equal]                                 & \cdots \arrow[rd, two heads]                                 & e_{(n-2).5} \arrow[ru, tail] \arrow[d,equal]          & e_{n-1} \arrow[rd, two heads] \arrow[d] \arrow[rdd, "\square", phantom] \arrow[l, "\square", phantom] & e_{(n-1).5} \arrow[ru, tail] \arrow[d, equal]                      & e_n \arrow[rd, two heads] \arrow[d]          & b \arrow[d, equal]      \\
	a \arrow[ru, tail] \arrow[d, equal] & E_1 \arrow[rd, two heads]                    & E_{1.5} \arrow[ru, tail] \arrow[d, equal]                                 & \cdots \arrow[rd, two heads]                                 & e_{(n-2).5} \arrow[ru, tail] \arrow[d, equal] & E_{n-1} \arrow[rd, two heads]                                                                         & e_{(n-1).5} \arrow[ru, tail] \arrow[d] \arrow[r, "\square", phantom] & E_n \arrow[rd, two heads]                    & b \arrow[d, equal]      \\
	a \arrow[ru, tail]                    &                                              & E_{1.5} \arrow[ru, tail]                                                    &                                                              & E_{(n-2).5} \arrow[ru, tail]                    &                                                                                                       & E_{(n-1).5} \arrow[ru, tail]                                         &                                              & b                        
	\end{tikzcd}
	\end{equation*}	
	Let $\mathbb{E}$ denote the $n$-extension on the bottom and $\mathbb{E}'$ the $n$-extension top row.
	From the assertion shown in the previous paragraph, one deduces $\mathbb{E}'$ is universal among the $n$-extensions in $\classExtcat_{\C}(\C,\C)$ mapping to $\mathbb{E}$.
	One uses Quillen's Theorem A to conclude that both 
	\[j\colon \classExtcat_{\C}(\C,\C)\to \classExtcat_{\mathcal{H}^{\mathrm{st}}_{\geq 0}(\C)}(\C,\C)\ \] and the restriction $j\colon W_{\C}\to W_{\mathcal{H}^{\mathrm{st}}_{\geq 0}(\C)}$ are weak homotopy equivalences, whence the proof is complete.
\end{proof}

We are now ready to prove the main theorem.

\begin{proof}[Proof of \Cref{thm:SpExtbifun}]
	Consider the exact embedding $\C\embed \mathcal{H}^{\mathrm{st}}_{\geq 0}(\C)$. We define $\Extcat_{\C}$ as the composite
	\begin{equation*}
	\begin{tikzcd}[column sep=5em]
	\C\op\times\C  \arrow[r,hook,"\eta_{\C}\times\eta_{\C}"] & \mathcal{H}_{\geq 0}^{\mathrm{st}}(\C)\op\times \mathcal{H}_{\geq 0}^{\mathrm{st}}(\C)\arrow[r,"\map_{\mathcal{H}_{\geq 0}^{\mathrm{st}}(\C)}"] & \Sp
	\end{tikzcd}
	\end{equation*}
	where $\map_{\mathcal{H}^{\mathrm{st}}_{\geq 0}(\C)}$ is the mapping spectrum functor of $\mathcal{H}^{\mathrm{st}}_{\geq 0}(\C)$. The assertion now follows from \Cref{lem:CintoHstgeq0} and \Cref{prop:equivExt}, since $\mathcal{H}_{\geq 0}^{\mathrm{st}}(\C)$ admits finite colimits.
\end{proof}

\begin{proof}[Proof of \Cref{proposition:is ClassicalExt}]
		Let $\delta\colon \Delta^1 \to \Delta^n$ be the embedding that sends the edge $01\in \Delta^1$ to $0n\in \Delta^n$. This is a trivial cofibration in the classical model structure of simplicial sets. One induces a map
	\begin{equation*}
	\map(\Delta^n,\mathcal{H}^{\mathrm{st}}_{\geq 0}(\C)) \to^{\delta^{\ast}} \map(\Delta^1,\mathcal{H}^{\mathrm{st}}_{\geq 0}(\C))
	\end{equation*}
	which, since $\mathcal{H}^{\mathrm{st}}_{\geq 0}(\C)$ is an $\infty$-category, is a trivial Kan fibration \cite[Second assertion in Corollary 3.6.4 with $Y=\Delta^0$]{Cis19}. From \Cref{lem:weqHomnExtnSt} and \Cref{prop:equivExt}, one deduces that the $\infty$-category $\Extcat_{\C}^n(\C,\C)$ is equivalent to the full subcategory of $\map(\Delta^1,\mathcal{H}^{\mathrm{st}}_{\geq 0}(\C))$ spanned by objects of the form $b\to \Sigma^n a$, where $a,b\in \C$. In other words, we have a pullback
	\begin{equation*}
	\begin{tikzcd}[column sep=5em]
	{\Extcat^n_{\C}(\C,\C)} \arrow[d] \arrow[r,hook]\arrow[rd,phantom,"\lrcorner",very near start] & {\map(\Delta^1,\mathcal{H}^{\mathrm{st}}_{\geq 0}(\C))} \arrow[d,"\mathrm{eval}_0\times \mathrm{eval}_1"]                   \\
	\C\times\C \arrow[r,"\eta_{\C} \times \Sigma^n\eta_{\C}",hook]                & \mathcal{H}^{\mathrm{st}}_{\geq 0}(\C)\times \mathcal{H}^{\mathrm{st}}_{\geq 0}(\C)
	\end{tikzcd}
	\end{equation*}
	In a similar vein, one can also construct a diagram of cartesian squares 
	\begin{equation*}
	\begin{tikzcd}[column sep=4em]
	{\classExtcat^n_{\C}(\C,\C)} \arrow[rd,phantom,"\lrcorner",very near start] \arrow[r,hook]\arrow[d]& {\classExtcat^n_{\mathcal{H}^{\mathrm{st}}_{\geq 0}(\C)}(\C,\C)} \arrow[d] \arrow[r,hook]\arrow[rd,phantom,"\lrcorner",very near start] & {\map(\Delta^n,\mathcal{H}^{\mathrm{st}}_{\geq 0}(\C))} \arrow[d,"\mathrm{eval}_0\times \cdots \times \mathrm{eval}_{n-1}"']                   \\
	\C \times \cdots \times \C \arrow[r,hook]  & \C \times \mathcal{H}^{\mathrm{st}}_{\geq 0}(\C) \times  \cdots \times \mathcal{H}^{\mathrm{st}}_{\geq 0}(\C) \times \C \arrow[r,hook]              &  \mathcal{H}^{\mathrm{st}}_{\geq 0}(\C)^{\times n}
	\end{tikzcd}
	\end{equation*}
	where $\classExtcat_{\mathcal{H}^{\mathrm{st}}_{\geq 0}(\C)}(\C,\C)$ is defined by the upper right pullback square in \Cref{eq:classExtcatH}. We can now assemble a homotopy coherent cube
	\begin{equation*}
	\begin{tikzcd}[column sep = 0.5em]
	{\classExtcat^n_{\mathcal{H}^{\mathrm{st}}_{\geq 0}(\C)}(\C,\C)} \arrow[rd] \arrow[rrr] \arrow[dddd] \arrow[rrrdddd,, very near start, phantom]                                        &                                                               &  & {\map(\Delta^n,\mathcal{H}^{\mathrm{st}}_{\geq 0}(\C))} \arrow[rd,"\delta^{\ast}"] \arrow[dddd,"\mathrm{eval}_0\times \cdots \times \mathrm{eval}_{n-1}",near end] &                                                                                     \\
	& {\Extcat^n_{\C}(\C,\C)} \arrow[rrr,crossing over]   \arrow[rrrdddd, very near start, phantom]             &  &                                                                                 & {\map(\Delta^1,\mathcal{H}^{\mathrm{st}}_{\geq 0}(\C))} \arrow[dddd,,"\mathrm{eval}_0\times \mathrm{eval}_1",near end]                \\
	&                                                               &  &                                                                                 &                                                                                     \\
	&                                                               &  &                                                                                 &                                                                                     \\
	\C \times \mathcal{H}^{\mathrm{st}}_{\geq 0}(\C) \times  \cdots \times \mathcal{H}^{\mathrm{st}}_{\geq 0}(\C) \times \C \arrow[rrr,near end] \arrow[rd] \arrow[rrrrd, phantom, very near start] &                                                               &  & \mathcal{H}^{\mathrm{st}}_{\geq 0}(\C)^{\times n} \arrow[rd,"\pi"]                    &                                                                                     \\
	& \C\times \C \arrow[rrr, "\eta_{\C} \times \Sigma^n\eta_{\C}"] \arrow[uuuu,crossing over,<-] &  &                                                                                 & \mathcal{H}^{\mathrm{st}}_{\geq 0}(\C)\times \mathcal{H}^{\mathrm{st}}_{\geq 0}(\C)
	\end{tikzcd}
	\end{equation*}
	where $\pi$ projects onto the outermost factors.
	As the front, back, and lower faces are cartesian, it follows from the pasting law that the top face is cartesian. We draw a diagram of cartesian squares
	\begin{equation*}
	\begin{tikzcd}
	{\classExtcat_{\C}^n(b,a)} \arrow[d,hook,"\widehat{j}"] \arrow[r]\arrow[dd,bend right=80,"\widehat{\ell}"'] \arrow[rd, "\lrcorner", phantom,very near start] & {\classExtcat_{\C}^n(\C,\C)} \arrow[d,hook,"j"] & \\
	{\classExtcat_{\mathcal{H}^{\mathrm{st}}_{\geq 0}(\C)}^n(b,a)} \arrow[d,"m"] \arrow[r,hook] \arrow[rd, "\lrcorner", phantom,very near start] & {\classExtcat_{\mathcal{H}^{\mathrm{st}}_{\geq 0}(\C)}^n(\C,\C)} \arrow[d] \arrow[r]\arrow[rd, "\lrcorner", phantom,very near start] & {\map(\Delta^n,\mathcal{H}^{\mathrm{st}}_{\geq 0}(\C))} \arrow[d,"\delta^{\ast}"] \\
	{\Extcat_{\C}^n(b,a)} \arrow[r]                                                 & {\Extcat_{\C}^n(\C,\C)} \arrow[r]                & {\map(\Delta^1,\mathcal{H}^{\mathrm{st}}_{\geq 0}(\C))}          
	\end{tikzcd}
	\end{equation*}
	where the pasted rectangle on the left is the same as the top square in \eqref{eq:ellandellhat}. 
	Since $\delta^\ast$ is a trivial Kan fibration, it is a proper map \cite[Proposition 4.4.11]{Cis19} and it has contractible fibres \cite[\href{https://kerodon.net/tag/0074}{Tag 0074}]{Ker}.
	It follows that the map \[\classExtcat_{\mathcal{H}^{\mathrm{st}}_{\geq 0}(\C)}^n(b,a) \to^{m} \Extcat_{\C}^n(b,a)\] is a localisation with respect to the maps in $\classExtcat_{\C}^n(b,a)$ that are sent to invertible maps in $\Extcat_{\C}^n(b,a)$ \cite[Proposition 7.1.12]{Cis19}. We showed in the proof of \Cref{lem:CintoHstgeq0} that the map
	\[j\colon \classExtcat_{\C}(\C,\C)\to \classExtcat_{\mathcal{H}^{\mathrm{st}}_{\geq 0}(\C)}(\C,\C)\ \]
	is a weak homotopy equivalence. Using the same methods, one can show that the same is true for $\widehat{j}$. We conclude that $\widehat{\ell}$ is a localisation.
\end{proof}

We conclude this section by spectrifying \Cref{prop:funcExtn}. This will be the key ingredient when we describe topological enhancements of extriangulated functors.

\begin{proposition}\label{prop:nattransExt}
	An exact functor 
	\begin{equation*}
	F\colon \mathscr{C} \to \mathscr{D}
	\end{equation*}
	induces a canonical natural transformation 
	\begin{equation}\label{eq:natF}
	\eta_F\colon \Extcat_{\C}(-,-)\Longrightarrow \Extcat_{\mathscr{D}}(F-,F-)
	\end{equation}
	of functors~$\C\op\times\C\to\Sp$.
\end{proposition}
\begin{proof}
	The natural transformation \eqref{eq:natF} should be defined in terms of a map of the bifibrations that make up its components.
	Consider the homotopy pullback diagram
	\begin{equation*}
	\begin{tikzcd}[ampersand replacement=\&]
	P^n\arrow[r,"\widehat{\epsilon_n^{\D}}"]\arrow[d]\arrow[dr, phantom, "\lrcorner",very near start]\& \C\times\C \arrow[d,"F\times F"] \\
	\Extcat^n_{\mathscr{D}}(\mathscr{D},\mathscr{D})\arrow[r,"e_{n}^{\D}"] \& \mathscr{D}\times\mathscr{D}
	\end{tikzcd}
	\end{equation*}
	where the bottom row is the bifibration $\mathrm{eval}_{0}\times \mathrm{eval}_{n}$.
	The functor $\widehat{\epsilon_n^{\D}}$ is then a bifibration corresponding to the bifunctor $\Extcat_{\mathscr{D}}(F-,F-)$. An appropriate morphism of fibrations is now induced by the universal property of pullbacks, as displayed in the diagram below.
	\begin{equation*}
	\begin{tikzcd}[ampersand replacement=\&]
	\Extcat^n_{\C}(\C,\C) \arrow[rd,"\epsilon_n^{\C}",bend left]\arrow[dd,bend right=49,"\Extcat^n\! F"']\arrow[d,dashed] \\		
	P^n\arrow[r,"\widehat{e^{\D}_n}"]\arrow[d]\arrow[dr, phantom, "\lrcorner",very near start]\& \C\times\C \arrow[d,"F\times F"] \\
	\Extcat^n_{\mathscr{D}}(\mathscr{D},\mathscr{D})\arrow[r,"\epsilon_n^{\mathscr{D}}"] \& \mathscr{D}\times\mathscr{D}
	\end{tikzcd}
	\end{equation*}
	Here, the map $\Extcat^n F$ is the induced map established by \Cref{prop:funcExtn}.
	The proof is complete.
\end{proof}

	\section{Extriangulated homotopy categories and higher extensions}\label{section:extriangulated}
	
	In \Cref{thm:SpExtbifun}, we showed that every exact $\infty$-category $\C$ determines a bifunctor
	\begin{equation*}
	\Extcat_{\C}\colon \C\op\times\C \to \Sp,
	\end{equation*}
	sending a pair of objects to the Retakh spectrum. Taking the $(-1)$-st homotopy group lets us induce a bifunctor on the homotopy category:
		\begin{equation*}
	\pi_{-1}\Extcat_{\C}\colon (\homcat\C)\op\times\homcat\C \to \Ab.
	\end{equation*}
	It has recently been shown that $\homcat \C$ has a natural extriangulated structure when equipped with the bifunctor~$\pi_{-1}\Extcat_{\C}$ as well as a readily accessible additive realisation \cite[Theorem~4.22]{NP20}. In this section, we make use of the Retakh Ext-$\Omega$-spectrum to shed new light on Nakaoka--Palu's result. More specifically, our claim is that the Retakh spectrum determines the extriangulation of the homotopy category as well as extriangulated functors between them. We will also prove in \Cref{thm:higherext} that higher extension categories induce the higher extension groups when passing to the homotopy category.
	
	We need a few preliminary definitions to review the relatively novel theory of extriangulated categories.
	
	\begin{definition}
		Let~$\c$ be an additive category and let~$\mathbf{E}\colon \c\op \times \c \to \Ab$ be an additive bifunctor into the category of abelian groups. We refer to group elements~$\xi\in \mathbf{E}(b,a)$ as \emph{$\mathbf{E}$-extensions}. A \emph{morphism of~$\mathbf{E}$-extensions}~$\xi\in\mathbf{E}(b_1,a_1)$ and~$\xi'\in\mathbf{E}(b_2,a_2)$ is a pair of morphisms~$(\alpha\colon a_1\to a_2,\beta\colon b_1\to b_2)$ such that~$\mathbf{E}(\beta,a_2)(\xi')=\mathbf{E}(b_1,\alpha)(\xi)$.
	\end{definition}
	
	Having defined a notion of a morphism of~$\mathbf{E}$-extensions, it is quickly checked that there is a category of such. We denote this by~$\mathbf{E}\mhyphen\!\Ext_{\c}^1$. 
	
	The role played by~$\mathbf{E}$--extensions is that of equivalence classes of exact sequences. If~$\C$ is an exact $\infty$-category, consider the functor
	\begin{equation*}
	\Extcat_{\C}^1\colon \C\op\times\C \to \kan
	\end{equation*}
	from which we can induce an additive bifunctor
	\begin{equation*}
	\mathbf{E}\coloneqq\pi_{0}\Extcat_{\C}^1\colon (\homcat\C)\op\times\homcat\C\to \Ab,
	\end{equation*}
	sending a tuple~$(b,a)$ to the set of equivalence classes of exact sequences in~$\C$ of the form~$ a\mono e\epi b$ (i.e. to a connected component of~$\Extcat^1_{\C}(b,a)$). An~$\mathbf{E}$-extension is simply an equivalence class of exact sequences. Given morphisms $\alpha\colon a_1\to a_2$ and $\beta\colon b_1\to b_2$, the homomorphism~$\mathbf{E}(b,\alpha)$ sends an exact sequence~$a\mono e\epi b$ to the bottom row of the following commutative diagram
	\begin{center}
		\begin{tikzcd}
			a\arrow[r]\arrow[d,"\alpha"]\arrow[dr, phantom, "\square"] &e\arrow[r]\arrow[d] & b\arrow[d,equal] \\
			a'\arrow[r] & z\arrow[r] & b
		\end{tikzcd}
	\end{center}
	and dually for~$\mathbf{E}(\beta,a)$. \Cref{Buh3.1} then gives that a morphism of~$\mathbf{E}$-extensions is an equivalence class of morphisms of complexes
	\begin{center}
		~$(\alpha,\varphi,\beta)\colon$
		\begin{tikzcd}
			a\arrow[r]\arrow[d,"\alpha"] & e\arrow[r]\arrow[d,"\varphi"] & b\arrow[d,"\beta"] \\
			a'\arrow[r] & e'\arrow[r] & b' \\
		\end{tikzcd}
	\end{center}
	which is uniquely determined by the pair~$(\alpha,\beta)$.
	
	\begin{lemma}\label{exfunclem}
		Let~$\mathbf{E}\colon\c\op\times\c\to\Ab$ and~$\mathbf{E}'\colon\c{'}\op\times\c'\to\Ab$ be additive bifunctors. If~$F\colon\c\to\c'$ is an additive functor, then a natural transformation~$$\eta\colon\mathbf{E}(-,-)\to\mathbf{E}'(F\op-,F-)$$ induces a functor 
		$F_{\eta}\colon \mathbf{E}\mhyphen\!\Ext_{\c}\to \mathbf{E}'\mhyphen\!\Ext_{\c'}$. If~$F$ is an equivalence and~$\eta$ is a natural isomorphism, then~$F_{\eta}$ is an equivalence. 
	\end{lemma}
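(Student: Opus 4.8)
The plan is to construct $F_\eta$ explicitly and then dispatch the two claims separately. On objects I would send an $\mathbf{E}$--extension $\xi\in\mathbf{E}(B,A)$ to the $\mathbf{E}'$--extension $\eta_{B,A}(\xi)\in\mathbf{E}'(FB,FA)$, where $\eta_{B,A}$ denotes the component of $\eta$ at $(B,A)$, and on morphisms I would send $(a,b)\colon\xi\to\xi'$ to $(Fa,Fb)$. The first point to verify is that $(Fa,Fb)$ is a legitimate morphism of $\mathbf{E}'$--extensions from $\eta_{B_1,A_1}(\xi)$ to $\eta_{B_2,A_2}(\xi')$, i.e.\ that $\mathbf{E}'(Fb,FA_2)\bigl(\eta_{B_2,A_2}(\xi')\bigr)=\mathbf{E}'(FB_1,Fa)\bigl(\eta_{B_1,A_1}(\xi)\bigr)$. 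First I would take the defining relation $\mathbf{E}(b,A_2)(\xi')=\mathbf{E}(B_1,a)(\xi)$ of the morphism $(a,b)$ --- an equality in $\mathbf{E}(B_1,A_2)$ --- and apply $\eta_{B_1,A_2}$ to both sides. Naturality of $\eta$ along $b\colon B_1\to B_2$ in the (contravariant) first slot rewrites the left-hand side as $\mathbf{E}'(Fb,FA_2)\bigl(\eta_{B_2,A_2}(\xi')\bigr)$, and naturality along $a\colon A_1\to A_2$ in the (covariant) second slot rewrites the right-hand side as $\mathbf{E}'(FB_1,Fa)\bigl(\eta_{B_1,A_1}(\xi)\bigr)$; comparing gives exactly the desired identity. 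Preservation of identities and of composites is then immediate from functoriality of $F$, so $F_\eta$ is a functor.

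For the second assertion I would assume $F$ is an equivalence and $\eta$ a natural isomorphism and show $F_\eta$ is essentially surjective and fully faithful. For essential surjectivity, given $\zeta\in\mathbf{E}'(B',A')$, essential surjectivity of $F$ provides isomorphisms $\alpha\colon A'\xrightarrow{\ \sim\ }FA$ and $\beta\colon B'\xrightarrow{\ \sim\ }FB$; transporting $\zeta$ along them, i.e.\ setting $\zeta'':=\mathbf{E}'(FB,\alpha)\bigl(\mathbf{E}'(\beta^{-1},A')(\zeta)\bigr)\in\mathbf{E}'(FB,FA)$, a short bifunctoriality computation shows that the pair $(\alpha,\beta)$ is an isomorphism $\zeta\to\zeta''$ in $\mathbf{E}'\mhyphen\!\Ext_{\c'}$. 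Surjectivity of $\eta_{B,A}$ then yields $\xi\in\mathbf{E}(B,A)$ with $\eta_{B,A}(\xi)=\zeta''$, and so $F_\eta(\xi)=\zeta''\cong\zeta$. Faithfulness of $F_\eta$ follows from faithfulness of $F$: if $(Fa,Fb)=(Fa',Fb')$ then $a=a'$ and $b=b'$. For fullness, given a morphism $(a',b')\colon\eta_{B_1,A_1}(\xi)\to\eta_{B_2,A_2}(\xi')$ in $\mathbf{E}'\mhyphen\!\Ext_{\c'}$, fullness of $F$ supplies $a\colon A_1\to A_2$ and $b\colon B_1\to B_2$ with $Fa=a'$ and $Fb=b'$; running the naturality argument of the previous paragraph backwards and using injectivity of $\eta_{B_1,A_2}$ shows $\mathbf{E}(b,A_2)(\xi')=\mathbf{E}(B_1,a)(\xi)$, so that $(a,b)$ is a morphism in $\mathbf{E}\mhyphen\!\Ext_{\c}$ with $F_\eta(a,b)=(a',b')$.

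I do not expect a genuine obstacle here: the whole argument is a matter of turning the crank on naturality of $\eta$. The only places that call for care are (i) keeping track of the variances of $\mathbf{E}$ and $\mathbf{E}'$ so that the two naturality squares of $\eta$ get invoked in the correct slots, and (ii) in the essential-surjectivity step, checking that the transported extension $\zeta''$ is genuinely isomorphic to $\zeta$ via the chosen vertex isomorphisms $(\alpha,\beta)$, which amounts to the identity $\mathbf{E}'(\beta,FA)(\zeta'')=\mathbf{E}'(B',\alpha)(\zeta)$ and follows from the two variables of $\mathbf{E}'$ commuting together with $\mathbf{E}'(\beta,-)\circ\mathbf{E}'(\beta^{-1},-)=\mathrm{id}$.
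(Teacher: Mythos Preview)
Your proof is correct and follows essentially the same approach as the paper: define $F_\eta$ on objects via $\eta_{B,A}$ and on morphisms via $(Fa,Fb)$, verify the morphism condition using the two naturality squares of $\eta$, and deduce the equivalence statement from $F$ being an equivalence and $\eta$ a natural isomorphism. Your treatment is in fact more detailed than the paper's, which dispatches essential surjectivity and full fidelity in a single sentence each.
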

	\begin{proof}
		The maps~$\eta_{b,a}\colon\mathbf{E}(b,a)\to\mathbf{E}'(Fb,Fa)$ establish a suitable map of objects. If~$(\alpha,\beta)\colon \xi_1\to\xi_2$ is a morphism of~$\mathbf{E}$-extensions~$\xi_1\in\mathbf{E}(b_1,a_1)$ and~$\xi_2\in\mathbf{E}(b_2,a_2)$, the commutativity of the diagram
		\begin{center}
			\begin{tikzcd}
				\mathbf{E}(b_1,a_1)\arrow[r,"\eta_{b_1,a_1}"]\arrow[d,"{\mathbf{E}(b_1,\alpha)}"'] & \mathbf{E}'(Fb_1,Fa_1)\arrow[d,"{\mathbf{E}'(Fb_1,\alpha)}"]\\
				\mathbf{E}(b_1,a_2)\arrow[r,"\eta_{b_1,a_2}"]&\mathbf{E}'(Fb_1,Fa_2) \\
				\mathbf{E}(b_2,a_2)\arrow[r,"\eta_{b_2,a_2}"]\arrow[u,"{\mathbf{E}(\beta,a_2)}"]&\mathbf{E}'(Fb_2,Fa_2)\arrow[u,"{\mathbf{E}'(\beta,Fa_2)}"']
			\end{tikzcd}
		\end{center}
		shows that~$(F\alpha,F\beta)$ is a morphism~$\eta_{b_1,a_1}\xi_1\to\eta_{b_2,a_2}\xi_2$ of~$\mathbf{E}'$-extensions. Since identity morphisms are preserved, and one readily checks that composition is respected by adding a column to the diagram above, we draw the conclusion that $F_{\eta}$ is indeed a functor.
		
		If~$F$ is an equivalence and~$\eta$ is a natural isomorphism, it is clear that~$F_{\eta}$ is essentially surjective. Full fidelity follows from the full fidelity of~$F$.
	\end{proof}
	
	Since we have a natural isomorphism~$\pi_{-1}\Extcat_{\C}\to \pi_{0}\Extcat^1_{\C}$, between the bifunctors~$(\homcat\C)\op\times\homcat \C\to\Ab$, \Cref{exfunclem} provides an equivalence 
	\begin{equation}\label{eq:natiso}
	\Phi\colon\mathbf{E}'\mhyphen\!\Ext_{\homcat\C} \to \mathbf{E}\mhyphen\!\Ext_{\homcat\C}
	\end{equation}
	where~$\mathbf{E}=\pi_{0}\Extcat^1_{\C}$ and~$\mathbf{E}'=\pi_{-1}\Extcat_{\C}$.
	
	Unlike~$\pi_0\Extcat_{\C}^1$, an arbitrary additive bifunctor~$\mathbf{E}$ need not be directly tied to exact sequences. In the general case, it necessary to manually link the elements of the abstract group~$\mathbf{E}(b,a)$ to concrete diagrams of the form~$a\to e\to b$, or rather equivalence classes of such. 
	
	\begin{definition}
		Let~$\c$ be an additive category. Two diagrams of the form~$a\to e\to b$ and \\ $a\to e'\to b$ are said to be \emph{equivalent} if there exists an isomorphism \\ $\varphi\colon e\to e'$ rendering the following diagram commutative.
		\begin{center}
			\begin{tikzcd}
				&e\arrow[dr]\arrow[dd,"\varphi",]& \\
				a\arrow[ur]\arrow[dr] && b\\
				&e'\arrow[ur]&
			\end{tikzcd}
		\end{center}
		The equivalence class containing~$a\to e\to b$ is denoted~$[a\to e\to b]$.
	\end{definition}
	
	It will be convenient to regard the equivalence classes of sequences as objects in a category~$\seq_{\c}$. 
	A morphism in this category from~$[a\to e\to b]$ to~$[a'\to e'\to b']$ is pair of morphisms~$(\alpha\colon a\to a',\beta\colon b\to b')$
	for which there exists a commutative diagram as follows:
	\begin{center}
		\begin{tikzcd}
			a\arrow[r]\arrow[d,"\alpha"] & e\arrow[r]\arrow[d,"\varphi"] & b\arrow[d,"\beta"] \\
			a'\arrow[r] & e'\arrow[r] & b' \\
		\end{tikzcd}
	\end{center}
	Composition in this category is clearly well defined and associative. 
	
	The attentive reader might have had a \textit{dejà vu} experience when reading the last paragraph. Indeed, the category of~$\pi_0\Extcat_{\C}^1$-extensions is remarkably similar to~$\seq_{\homcat\C}$. The difference is that~$\seq_{\homcat\C}$ contains more objects, giving rise to an embedding~$\iota_{\C}\colon\mathbf{E}\mhyphen\!\Ext^1_{\homcat\C}\embed \seq_{\homcat\C}$, where~$\mathbf{E}=\pi_0\Extcat_{\C}^1$. This is an example of the general notion of realisation.
	
	\begin{definition}
		Let~$\mathbf{E}\colon \c\op \times \c \to \Ab$ be an additive bifunctor.
		A \emph{realisation} is a functor 
		\begin{equation*}
		\mathfrak{s}\colon\mathbf{E}\mhyphen\!\Ext_{\c}^1\to \seq_{\c}
		\end{equation*}
		such that an~$\mathbf{E}$-extension~$\xi\in \mathbf{E}(b,a)$ is sent to an equivalence class~$[a\to e\to b]$ with appropriate endpoints. 
		Equivalently, a \emph{realisation} is a injection~$\mathfrak{s}$ that sends an~$\mathbf{E}$--extension~$\xi\in \mathbf{E}(b,a)$ to an equivalence class~$[a\to e\to b]$, satisfying the following property: If~$(\alpha,\beta)$ is a morphism of~$\mathbf{E}$-extensions that are realised by~$[a\to e\to b]$ and~$[a'\to e'\to b']$, there exists a morphism~$\varphi\colon e\to 
		e'$ such that the diagram
		\begin{center}
			\begin{tikzcd}
				a\arrow[r]\arrow[d,"\alpha"] & e\arrow[r]\arrow[d,"\varphi"] & b\arrow[d,"\beta"] \\
				a'\arrow[r] & e'\arrow[r] & b' \\
			\end{tikzcd}
		\end{center}
		is commutative. 
		
		A realisation~$\mathfrak{s}$ is \emph{additive} if~$\mathfrak{s}(_{b}0_{a})=[\sigma_1(b,a)]$ (where ~$_{b}0_{a}$ is the zero element in~$\mathbf{E}(b,a)$ and~$\sigma_1(b,a)$ is the split exact sequence) and~$\mathfrak{s}(\xi_1\oplus\xi_2)=\mathfrak{s}(\xi_1)\oplus\mathfrak{s}(\xi_2)$. 
	\end{definition}
	
	A realisation restricts to a map from~$\mathbf{E}(b,a)$ to a subset of the set of equivalence classes of the form~$[a\to e\to b]$. Since~$\mathbf{E}(b,a)$ is an abelian group, this map transports a group structure to this subset. Moreover, an additive realisation gives rise to a group structure that behaves similarly to Yoneda Ext-groups in exact categories.  
	
	The embedding~$\iota_{\C}\colon\mathbf{E}\mhyphen\!\Ext^1_{\homcat\C}\embed \seq_{\homcat\C}$, where~$\mathbf{E}=\pi_{0}\Extcat^1_{\C}$, was our motivating example of a realisation. It is easy to see that this is additive. Composing with the above equivalence \eqref{eq:natiso} yields an additive realisation 
	\begin{equation*}
	\mathbf{E}'\mhyphen\!\Ext^1_{\homcat\C}\to\mathbf{E}\mhyphen\!\Ext^1_{\homcat\C}\embed \seq_{\homcat\C}
	\end{equation*}
	of~$\mathbf{E}'=\pi_{-1}\Extcat_{\C}$.
	
	We now have all ingredients to define what is meant by an extriangulated category and extriangulated functors. 
	
	\begin{definition}[{\cite[Definition~2.12]{NP19}}]\label{extricat}
		Let~$\c$ be an additive category, and consider a bifunctor~$\mathbf{E}\colon\c\op\times\c\to\Ab$ as well as an additive realisation~$\mathfrak{s}$ of~$\mathbf{E}$. The triple~$(\c,\mathbf{E},\mathfrak{s})$ is an \emph{extriangulated category} if the following axioms hold.
		\begin{enumerate}[label=(ET\arabic*), leftmargin=1.42cm]
			\setcounter{enumi}{2}
			\item[(ET3)$\;\;\;$]\label{ET3} Let~$\xi_1\in \mathbf{E}(b_1,a_1)$ and~$\xi_2\in \mathbf{E}(b_2,a_2)$ be~$\mathbf{E}$-extensions with realisations 
			\begin{equation*}
			\mathfrak{s}(\xi_1) = [a_1\to e_1\to b_1] \qquad \mathfrak{s}(\xi_2) = [a_2\to e_2\to b_2]
			\end{equation*}
			If the solid part of the following diagram commutes
			\begin{center}
				\begin{tikzcd}
				a_1\arrow[r]\arrow[d] & e_1\arrow[r]\arrow[d] & b_1\arrow[d,dashed] \\
					a_2\arrow[r] & e_2\arrow[r] & b_2
				\end{tikzcd}
			\end{center}
			the dashed morphism exists and the resulting diagram is commutative.
			\item[(ET3$\op$)]\label{ET3op} Dual of the above \cite[(ET3$\op$) in Definition~2.12]{NP19}.
			\item[(ET4)$\;\;\;$]\label{ET4} Let~$\xi_1\in \mathbf{E}(b,a)$ and~$\xi_2\in \mathbf{E}(c,e)$ be~$\mathbf{E}$-extensions with realisations 
			\begin{equation*}
			\mathfrak{s}(\xi_1) = [a\to e\to b], \qquad \mathfrak{s}(\xi_2) = [e\to f\to c].
			\end{equation*}
			There exists a commutative diagram 
			\begin{center}
				\begin{tikzcd}
					a\arrow[r,"\iota"]\arrow[d,equal] & e\arrow[r,"\pi"]\arrow[d] & b\arrow[d,"\iota'"] \\
					a\arrow[r] & f\arrow[r]\arrow[d] & g\arrow[d,"\pi'"] \\
					& c\arrow[r,equal] & c
				\end{tikzcd}
			\end{center}
			and an~$\mathbf{E}$-extension~$\xi_3\in\mathbf{E}(g,a)$ which is realised by the middle row. Moreover, we require that
			\begin{enumerate}[label=(ET4.\arabic*), leftmargin=1.38cm]
				\item~$\mathfrak{s}(\mathbf{E}(c,\pi)(\xi_2))=[b\to^{\iota'} g\to^{\pi'} c]$,
				\item~$\mathbf{E}(\iota',a)(\xi_3)=\xi_1$,
				\item~$\mathbf{E}(g,\iota)(\xi_3)=\mathbf{E}(\pi',e)(\xi_2)$.
			\end{enumerate}
			\item[(ET4$\op$)]\label{ET4op} Dual of the above \cite[Remark~2.22]{NP19}.
		\end{enumerate}
	\end{definition}
	
	\begin{definition}[{\cite[Definition~2.32]{B-TS20}}]\label{extrifunc}
		Let~$(\c,\mathbf{E},\mathfrak{s})$ and~$(\c',\mathbf{E}',\mathfrak{s}')$ be extriangulated categories. An \emph{extriangulated functor} consists of an additive functor~$F\colon \c\to \c'$ for which there exists a natural transformation~$\eta \colon \mathbf{E}(-,-)\to \mathbf{E}'(F\op-,F-)$ such that the diagram of functors
		\begin{center}
			\begin{tikzcd}
				\mathbf{E}\mhyphen\!\Ext_{\c} \arrow[r,"F_{\eta}"]\arrow[d,"\mathfrak{s}"] & \mathbf{E}'\mhyphen\!\Ext_{\c'}\arrow[d,"\mathfrak{s}'"]\\
				\seq_{\c} \arrow[r,"\seq_{F}"] & \seq_{\c'}
			\end{tikzcd}
		\end{center}
		commutes, where~$F_{\eta}$ is as in \Cref{exfunclem} and~$\seq_{F}$ sends the class~$[a\to^{\iota} e\to^{\pi} b]$ to the class \\ $[Fa\to^{F\iota} Fe\to^{F\pi} Fb]$.
		An extriangulated functor is an \emph{equivalence} if~$F$ is an equivalence of categories. 
	\end{definition}
	
	The homotopy category of an exact $\infty$-category admits a natural extriangulation. More precisely:
	
	\begin{theorem}[{\cite[Theorem~4.22 and Proposition~4.28]{NP20}}]\label{NP20.4.22}
		Let~$\C$ be an exact $\infty$-category, and consider the additive bifunctor
		\begin{equation*}
		\mathbf{E}=\pi_0\Extcat_{\C}^1\colon (\homcat\C)\op\times \homcat\C \to \Ab,
		\end{equation*}
		as well as the additive realisation 
		\begin{equation*}
		\mathbf{E}=\pi_{0}\Extcat^1_{\C}\embed^{\iota_{\C}} \seq_{\homcat\C}.
		\end{equation*}
		Then the triple~$(\homcat\C,\mathbf{E},\iota_{\C})$ is an extriagulated category. Furthermore, if~$F\colon \C\to \D$ is an exact functor, then the induced functor~$\homcat F\colon \homcat\C\to \homcat\D$ between homotopy categories is an extriangulated functor.
	\end{theorem}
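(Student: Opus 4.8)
The plan is to check that the triple~$(\homcat\C,\mathbf{E},\iota_{\C})$, with~$\mathbf{E}=\pi_{0}\Extcat^1_{\C}$, meets every requirement of \Cref{extricat}, and then to treat the statement about exact functors separately. First I would verify the standing hypotheses of that definition: that~$\mathbf{E}$ is a \emph{biadditive} bifunctor and that~$\iota_{\C}$ is an \emph{additive realization}. Functoriality of~$\Extcat^1_{\C}$ with values in abelian groups was already recorded via \Cref{Buh2.12} in the discussion following \Cref{proposition:Ext1IsKan}; biadditivity --- that~$\mathbf{E}$ preserves finite biproducts in each variable --- should follow from the corresponding property of the Kan complexes~$\Extcat^1_{\C}(B,A)$, using that biproducts in~$\C$ are simultaneously products and coproducts. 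That~$\iota_{\C}$ is a realization is essentially the paragraph preceding the theorem: by \Cref{Buh3.1} any morphism of~$\mathbf{E}$--extensions is represented by a morphism of exact sequences, and this supplies the middle map demanded by the realization axiom. Additivity reduces to the observations that~$\sigma_{1}(B,A)$ represents the zero class --- being the pushout of~$0\mono B\epi B$ along~$0\to A$, hence the image of the trivial group~$\mathbf{E}(B,0)$ --- and that~$\iota_{\C}$ visibly commutes with biproducts.

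For axiom (ET3) I would argue as follows. Suppose~$\xi_{1}$ and~$\xi_{2}$ are realized by exact sequences~$A_{i}\mono E_{i}\epi B_{i}$ and that the solid square on the objects~$A_{1},E_{1},A_{2},E_{2}$ commutes in~$\homcat\C$. A homotopy commutative square always lifts to a homotopy coherent one~$[1]\times[1]\to\C$ --- one only has to choose representatives for the four edges and a filler for the diagonal triangle --- so one may take cofibers. Since cofibers are functorial and the cofiber of~$A_{i}\mono E_{i}$ is~$B_{i}$, the lifted square induces the dashed morphism~$B_{1}\to B_{2}$, and the resulting diagram is a map of exact sequences. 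Axiom (ET3$\op$) is the exact dual, using the functoriality of fibers and pullbacks of fibrations instead.

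The genuine work is axiom (ET4) --- and, dually, (ET4$\op$) --- the octahedron axiom, which I expect to be the main obstacle. Starting from~$\xi_{1}$ realized by~$A\mono E\epi B$, with maps~$i$ and~$p$, and~$\xi_{2}$ realized by~$E\mono F\epi C$, I would form~$G$ as the pushout of~$p\colon E\to B$ along the cofibration~$E\mono F$. By \Cref{Bar15.4.5} this square is bicartesian, and by \Cref{Buh2.12} it presents as a map of exact sequences whose lower row~$B\mono G\epi C$ realizes~$\mathbf{E}(C,p)(\xi_{2})$ --- condition (ET4.1). The composite~$A\mono E\mono F$ is a cofibration, and pushout pasting identifies its cofiber with~$F\sqcup_{E}B=G$, so the middle row~$A\mono F\epi G$ is exact; call its class~$\xi_{3}\in\mathbf{E}(G,A)$. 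Then (ET4.2) holds because the same bicartesian square also exhibits~$E$ as the pullback~$F\times_{G}B$, so pulling~$\xi_{3}$ back along~$B\to G$ returns~$\xi_{1}$; and (ET4.3) should fall out of a diagram chase identifying the pushforward of~$\xi_{3}$ along~$i$ with the pullback of~$\xi_{2}$ along~$G\to C$, with the~$3\times3$--lemma (\Cref{lemma:3x3}) and the factorization of \Cref{Buh3.1} doing the book-keeping. Reconciling these two descriptions of the class attached to the middle column is the delicate point --- exactly as in the classical exact-category case on which Nakaoka--Palu model their argument --- and (ET4$\op$) is obtained by systematically replacing pushouts of cofibrations with pullbacks of fibrations throughout.

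Finally, for the statement on functors: an exact functor~$F\colon\C\to\D$ preserves exact sequences, so it induces maps~$\Extcat^1_{\C}(B,A)\to\Extcat^1_{\D}(FB,FA)$ and therefore, on~$\pi_{0}$, a natural transformation~$\eta\colon\mathbf{E}(-,-)\to\mathbf{E}'(F\op-,F-)$ --- precisely the first component of the natural transformation of \Cref{prop:nattransExt}. Since~$F$ carries an exact sequence representing a class~$\xi$ to one representing~$\eta(\xi)$, the compatibility square of \Cref{extrifunc} relating~$F_{\eta}$, the realizations~$\iota_{\C}$ and~$\iota_{\D}$, and~$\seq_{F}$ commutes on the nose, while \Cref{exfunclem} guarantees that~$F_{\eta}$ is a well-defined functor; hence~$(\homcat F,\eta)$ is an extriangulated functor, and an equivalence whenever~$F$ is.
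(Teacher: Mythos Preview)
The paper does not prove this theorem at all: it is stated as a citation of Nakaoka--Palu \cite[Theorem~4.22 and Proposition~4.28]{NP20} and is used as a black box in the subsequent discussion. There is therefore no proof in the paper against which to compare your proposal.

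That said, your outline is a reasonable sketch of how the Nakaoka--Palu argument goes, and it correctly identifies the main ingredients available in this paper (\Cref{Buh2.12}, \Cref{Buh3.1}, \Cref{lemma:3x3}) that one would use to verify the axioms. One point to be careful about in your treatment of (ET3): lifting a homotopy commutative square to a homotopy coherent one is not automatic in a general quasi-category --- the obstruction is precisely whether the two composites around the square are homotopic, which you are assuming, but the lift of the \emph{whole} diagram including the exact sequences requires a bit more care than ``choose a filler for the diagonal triangle''. Nakaoka--Palu handle this, and the (ET4.3) compatibility you flag as delicate, with explicit constructions; your sketch is on the right track but would need those details filled in to stand on its own.
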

	
	An extriangulated category is \textit{topological} if it is equivalent to the homotopy category of some exact $\infty$-category.
	
	We noted above that we have a natural isomorphism of bifunctors
	\begin{equation*}
 	\pi_{-1}\Extcat_{\C} \Longrightarrow \pi_0\Extcat_{\C}^1.
	\end{equation*}
	A realisation of~$\pi_{-1}\Extcat_{\C}(-,-)$ is achieved by precomposing the realisation~$\iota_{\C}$ with the equivalence $\Phi$ from \eqref{eq:natiso}. It now follows from \Cref{exfunclem} that the identity functor~$1_{\homcat\C}$ on~$ \homcat\C$ yields an extriangulated equivalence 
	\begin{equation*}
	(\homcat\C,\pi_{-1}\Extcat_{\C},\iota_{\C}\circ\Phi)\to (\homcat\C,\pi_0\Extcat_{\C}^1,\iota_{\C}).
	\end{equation*}
	In other words, we can present the extriangulation on the topological extriangulated category~$\homcat\C$ using the Retakh spectrum. 
	
	Let~$F\colon \C\to \D$ be an exact functor of exact $\infty$-categories. By \Cref{prop:nattransExt} we have a natural transformation 
	\begin{equation*}
	\Extcat_{\C}(-,-)\Longrightarrow \Extcat_{\mathscr{D}}(F-,F-)
	\end{equation*}
	of functors~$\C\op\times\C\to\Sp$. We can thus induce a natural transformation 
	\begin{equation*}
	\eta\colon\pi_{-1}\Extcat_{\C}(-,-)\Longrightarrow \pi_{0}\Extcat^1_{\mathscr{D}}(F-,F-)
	\end{equation*}
	of functors~$(\homcat\C)\op\times\homcat\C\to\Ab$. Pushing this natural transformation down to the homotopy category ensures that the functor
	\begin{equation*}
	\homcat F\colon \homcat \C\to \homcat\D
	\end{equation*}
	is extriangulated. 
	Indeed, by \Cref{exfunclem}, we have a functor 
	\begin{equation*}
	\homcat F_{\eta}\colon \pi_{-1}\Extcat_{\C}\mhyphen\! \Ext \to \pi_{-1}\Extcat_{\C}\mhyphen\! \Ext,
	\end{equation*}
	since the vertical maps in the diagram
	\begin{center}
		\begin{tikzcd}
			\pi_{-1}\Extcat_{\C}\mhyphen\! \Ext \arrow[r,"\homcat F_{\eta}"]\arrow[d] & \pi_{-1}\Extcat_{\D}\mhyphen\! \Ext\arrow[d]\\
			\seq_{\homcat\C} \arrow[r,"\seq_{F}"] & \seq_{\homcat\D}
		\end{tikzcd}
	\end{center}
	sends an equivalence class of extensions~$[a\mono e\epi b]$ to itself, it is easy to see that the diagram commutes, and consequently that~$\homcat F$ is an extriangulated functor. 
	
	Klemenc' embedding theorem (restated here as \Cref{thm:Kle20.1}), as well as the discussion above, proves that any small topological extriangulated category embeds into a triangulated category.
	
	As a new contribution, we generalise \Cref{NP20.4.22}. The spectrum objects~$\Extcat_{\C}(b,a)$ do not only determine a natural extriangulation on~$\homcat \C$. They also induce the higher extension functors, recently defined by Gorsky--Nakaoka--Palu \cite[Definition~3.1]{GNP21}. We review their definition first.
	
	Let~$F$ and~$G$ be bifunctors~$\c\op\times\c \to \Ab$. For a fixed pair of objects~$(x,y)$, consider the bifunctor~$S\coloneqq G(-,y)\underset{\Z}{\otimes} F(x,-)$. Using the coend of~$G$ and~$F$, one defines $G\diamond F$ as follows:
	\begin{equation*}
	(G\diamond F)(x,y) \coloneqq \int\limits^{c\in\c} G(c,y)\underset{\Z}{\otimes} F(x,c).
	\end{equation*}
	Since the category of abelian groups is cocomplete, the coend of a bifunctor~$S$ can be obtained as the coequaliser of the diagram
	\begin{equation}\label{eq:coenddef}
	\begin{tikzcd}[column sep = 6em]
	{\coprod\limits_{f\colon c_2\rightarrow c_1} S(c_2,c_1)} \arrow[r,shift left,"\coprod{S(c_2,f)}"]\arrow[r,shift right,"\coprod{S(f,c_1)}"'] & \coprod\limits_{c} S(c,c),
	\end{tikzcd}
	\end{equation}
	where the first coproduct is indexed over all morphisms in~$\c$, and the second over all its objects.

	Let~$(\C,\mathbf{E},\mathfrak{s})$ be an extriangulated category. We set the zeroth extension functor to be the Hom-functor~$\hom_{\c}(-,-)$, and the first to be~$\mathbf{E}$.
	The higher extension functors~$\mathbf{E}^{\diamond n}$ are then defined inductively by the formula \cite[Definition~3.1]{GNP21}
	\begin{equation*}
	\mathbf{E}^{\diamond n} \coloneqq \mathbf{E}^{\diamond n-1}\diamond \mathbf{E}.
	\end{equation*}
	
	This section is concluded with its main result.
	
	\begin{theorem}\label{thm:higherext}
		Let~$\C$ be an exact $\infty$-category. 
		The bifunctor 
		\begin{equation*}
		\pi_0\Extcat^n_{\C}\colon (\homcat\C)\op\times \homcat\C \to \Ab
		\end{equation*}
		is naturally isomorphic to the~$n$th extension functor~$\pi_{0}\Extcat^1_{\C}(-,-)^{\diamond n}$.
	\end{theorem}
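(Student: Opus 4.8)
The plan is to prove the statement by induction on $n$, where the base cases $n=0$ and $n=1$ hold by definition of the higher extension functors. Assume the result for $n-1$, so that there is a natural isomorphism $\pi_0\Extcat^{n-1}_{\C}(-,-)\cong\mathbf{E}^{\diamond n-1}$ with $\mathbf{E}=\pi_0\Extcat^1_{\C}(-,-)$. Unwinding the recursive definition of $\Extcat^n_{\C}(B,A)$ as a pullback of $\Extcat^{n-1}_{\C}(B,\C)\to\C\leftarrow\Extcat^1_{\C}(\C,A)$ along the evaluation maps, an $n$--extension decomposes (via the chosen middle object $C=E_n$, i.e.\ the object $E_{(n-1).5}$ in our indexing — the object over which the last short exact sequence is glued) as an $(n-1)$--extension ending in $C$ concatenated with a short exact sequence $A\mono E_1\epi C'$... more precisely, the Baer-type concatenation operation splits an $n$--extension of $B$ by $A$ into a $1$--extension of $C$ by $A$ followed by an $(n-1)$--extension of $B$ by $C$ for some object $C$. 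This concatenation is exactly the geometric origin of the $\diamond$ product: summing over all $C$ and all morphisms between the glued pieces, modulo the relation identifying the two ways of transporting along a morphism $C\to C'$, is precisely the coequalizer presentation~\eqref{eq:coenddef} of the coend.

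The key steps, in order, are: (i) establish a concatenation map $\pi_0\Extcat^1_{\C}(C,A)\otimes_{\Z}\pi_0\Extcat^{n-1}_{\C}(B,C)\to\pi_0\Extcat^n_{\C}(B,A)$, natural in all variables, arising from gluing representatives along $C$ and using \Cref{Buh2.12} and \Cref{Buh3.1} to see it is well-defined on equivalence classes and bilinear (bilinearity uses the Baer sum description of the abelian group structure on $\pi_0$, which reduces to matrix computations as in the discussion after \Cref{theorem:IsOmegaSpectrum}); (ii) check that this map equalizes the two structure maps $S(C',f)$ and $S(f,C)$ of the coend diagram~\eqref{eq:coenddef}, which follows from \Cref{Buh3.1}: a morphism $f\colon C\to C'$ induces, by pushout/pullback, an equivalence between the $n$--extension obtained by pushing the $1$--part along $f$ and then concatenating, and the one obtained by pulling the $(n-1)$--part back along $f$ and concatenating; (iii) conclude that the concatenation induces a natural map $\mathbf{E}\diamond\pi_0\Extcat^{n-1}_{\C}(-,-)\to\pi_0\Extcat^n_{\C}(-,-)$ out of the coend; and (iv) show this map is an isomorphism by constructing an inverse: every $n$--extension is, by its very construction as the pullback defining $\Extcat^n_{\C}$, in the image of concatenation, giving surjectivity, while injectivity amounts to showing that two $n$--extensions are equivalent iff their decompositions differ by a zigzag of the morphisms appearing in~\eqref{eq:coenddef}, which again is a diagram chase powered by \Cref{Buh3.1} and \Cref{5lem}. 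Finally, combining (iii)--(iv) with the inductive hypothesis and functoriality of $\diamond$ in its second argument gives $\pi_0\Extcat^n_{\C}(-,-)\cong\mathbf{E}\diamond\mathbf{E}^{\diamond n-1}=\mathbf{E}^{\diamond n}$, as desired (after checking $\diamond$ is suitably associative/symmetric so the order of concatenation does not matter up to natural isomorphism, or simply arranging the induction so that one always concatenates the new short exact sequence on the same side).

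The main obstacle I expect is step (iv), and within it the injectivity: showing that the concatenation map out of the coend is injective requires understanding when two concatenations $\xi'\cdot\zeta'$ and $\xi''\cdot\zeta''$ (with glued objects $C'$, $C''$) represent the same class in $\pi_0\Extcat^n_{\C}(B,A)$. One must show that any equivalence of the total $n$--extensions can be factored through elementary moves each of which is either an equivalence of the $1$--part, an equivalence of the $(n-1)$--part, or a transport along a morphism $C'\to C''$ of glued objects — and this factorization is exactly where \Cref{Buh3.1}'s canonical factorization of a map of exact sequences does the work, applied repeatedly across the $n$ gluing spots. A secondary subtlety is bookkeeping the distinction, emphasised earlier in the paper, between the strict homotopy-coherent diagrams that are objects of $\Extcat^n_{\C}$ and the looser "classical" $n$--extension diagrams; since we are passing to $\pi_0$ this distinction washes out, but one should remark on it explicitly, citing \Cref{proposition:is ClassicalExt} for the intuition in the nerve case and noting that on connected components the induced $f_{i.5}$'s are determined up to homotopy by the rest of the data.
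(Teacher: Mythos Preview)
Your proposal is correct and follows essentially the same route as the paper: induction with base cases $n=0,1$, a concatenation map that one checks coequalizes the two structure maps of~\eqref{eq:coenddef} via pushout/pullback along the middle morphism (using \Cref{Buh2.12} and \Cref{Buh3.1}), and then an identification with the coend. The only tactical difference is in your step~(iv): rather than proving bijectivity of the comparison map out of the coend, the paper shows directly that $\pi_0\Extcat^{n+1}_\C(B,A)$ together with the concatenation map satisfies the universal property of the coequalizer, so your injectivity worry becomes the well-definedness of $\overline m(\mathbb{E})\coloneqq m(\widetilde{\mathbb{E}})$ for an arbitrary cofork~$m$ and any choice of preimage~$\widetilde{\mathbb{E}}$ --- and this is dispatched by exactly the mechanism you identify (a single map in $\Extcat^{n+1}_\C(B,A)$ yields a coend relation via its component on the glued object, and zigzags are handled one step at a time).
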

	\begin{proof}
		We proceed by induction. The base cases will be~$n=0$ and~$n=1$, both of which hold by definition.
		
		Suppose that we have natural isomorphisms
		\[
		\Phi^n_{x,y}\colon \pi_{0}\Extcat^1_{\C}(x,y)^{\diamond n} \to \pi_0\Extcat^n_{\C}(x,y),
		\]
		for some~$n\geq 1$. Applying the tensor functor gives another natural isomorphism
		\begin{equation*}
		\begin{tikzcd}
		\Phi^n_{x,y}\otimes 1\colon \pi_{0}\Extcat^1_{\C}(x,y)^{\diamond n}\underset{\Z}{\otimes} \pi_{0}\Extcat^1_{\C}(b,c) \arrow[r] & \pi_0\Extcat^n_{\C}(x,y) \underset{\Z}{\otimes} \pi_{0}\Extcat^1_{\C}(b,c).
		\end{tikzcd}
		\end{equation*}
		Consider also the homomorphism
		\begin{equation*}
		\begin{tikzcd}
		\gamma_{b,a}\colon\pi_0\Extcat^n_{\C}(c,a) \underset{\Z}{\otimes} \pi_{0}\Extcat^1_{\C}(b,c)\arrow[r] & \pi_0\Extcat^{n+1}_{\C}(b,a)
		\end{tikzcd}
		\end{equation*}
		that concatenates exact sequences.
		Our strategy to complete the inductive step will be to show that the composite $\gamma_{b,a}\circ (\Phi^n_{c,a}\otimes 1)$ is a coequaliser of the diagram \eqref{eq:coenddef}, with the tensor product~$\pi_0\Extcat^n_{\C}(-,a)\underset{\Z}{\otimes} \pi_{0}\Extcat^1_{\C}(b,-)$ in place of~$S$.
		We simplify our notation by setting~$S=\pi_0\Extcat^n_{\C}(-,a)\underset{\Z}{\otimes} \pi_{0}\Extcat^1_{\C}(b,-)$ for the remainder of this proof.
		
		For this composite to be a coequaliser, it must be the case that 
		\begin{equation}\label{eq:iscofork}
		\gamma_{b,a}\circ (\Phi^n_{c,a}\otimes 1)\circ S(c_2,f) = \gamma_{b,a}\circ (\Phi^n_{c,a}\otimes 1)\circ S(f,c_1)
		\end{equation}
		for all maps $f\colon c_2\to c_1$. In other words, it should be shown that 
		\begin{equation*}
		\gamma_{b,a}\circ (\Phi^n_{c,a}\otimes 1)
		\end{equation*}
		is a cofork of the diagram \eqref{eq:coenddef}. 
		The situation is illustrated by the solid part of the following diagram
		\begin{equation}\label{eq:iscoforkdia}
		\begin{tikzcd}
		a \arrow[r, dashed] \arrow[d, dashed,"1"] & \cdots \arrow[r, dashed] & e_{(n-1).5} \arrow[r, dashed] \arrow[d,dashed,"1"] & c' \arrow[r, dashed] \arrow[d, dashed] \arrow[dr, phantom, "\lrcorner",very near start] & c_2 \arrow[d, "f"'] \arrow[r] \arrow[dr, phantom, "\ulcorner",very near end] & g \arrow[r] \arrow[d, dashed] & b \arrow[d, dashed,"1"] \\
		a \arrow[r]                           & \cdots \arrow[r]         & e_{(n-1).5} \arrow[r]                   & e_n \arrow[r]                          & c_1 \arrow[r, dashed]         & g' \arrow[r, dashed]          & b                  
		\end{tikzcd}
		\end{equation}
		The diagram is completed by taking a pullback and pushout of~$f$ along the morphisms~$e_n\to c_1$ and~$c_2\to g$, respectively, and then adding identity morphisms so that we end up with a morphism of exact sequences. 
		The top row of the entire diagram then displays a representative of an equivalence class in the left hand side of \eqref{eq:iscofork}, and the bottom row displays one the right hand side. Since the diagram \eqref{eq:iscoforkdia} is a morphism between them in~$\Extcat^{n+1}_{\C}(b,a)$, these representatives belong to the same equivalence class, as desired.

		Lastly, we explain why~$\gamma_{b,a}\circ (\Phi^n_{c,a}\otimes 1)$ is a universal cofork of \eqref{eq:coenddef}. Given an abelian group~$M$ and a group homomorphism 
		\begin{equation*}
		\begin{tikzcd}
		\coprod\limits_{c} S(c,c)\arrow[r,"m"] & M
		\end{tikzcd}
		\end{equation*}
		satisfying~$m\circ S(c_2,f) = m\circ S(f,c_1)$ for all $f\colon c_2\to c_1$, our task is to find a homomorphism~$$\overline{m}\colon \pi_0\Extcat^{n+1}_\C(b,a)\to M$$ such that~$m=\overline{m}\circ c_{b,a}\circ (\varphi^n_{c,a}\otimes 1)$, and then show that~$\overline{m}$ is unique. Let~$\overline{m}(\mathbb{E})\coloneqq m(\widetilde{\mathbb{E}})$, where~$\widetilde{\mathbb{E}}$ is a choice of preimage of~$\mathbb{E}$ in~$\coprod\limits_c S(c,c)$. This is a well-defined homomorphism; a different choice of preimage would yields the same result. The uniqueness of~$\overline{m}$ is a direct consequence of the fact that~$c_{b,a}\circ (\varphi^n_{c,a}\otimes 1)$ is an epimorphism.
	\end{proof}
	
	\appendix
	\section{Proofs of diagram lemmas}\label{app:diaproofs}
	
	We ended \Cref{sec:exqcat} with a series of diagram lemmas for exact categories, claiming that they generalise to exact $\infty$-categories. These results are found in Bühler's monograph \cite{Buh10}, as well as countless other texts. The proofs presented here are in an $\infty$-categorical context. Unlike the special case of exact 1-categories, cofibrations (resp. fibrations) are not necessarily monomorphisms (resp. epimorphisms) in a general $\infty$-categorical context.
	
	\begin{replemma}{5lem}
		Consider a map of exact sequences
		\begin{center}
			\begin{tikzcd}
				a \arrow[r, "i_a",tail] \arrow[d,"f'"]   & e \arrow[d,"f"]\arrow[r,two heads,"p_b"] & b\arrow[d,"f''"] \\
				
				c \arrow[r,tail,"i_c"]               & f \arrow[r,two heads,"p_d"] & d                                  
			\end{tikzcd}
		\end{center}
		If~$f'$ and~$f''$ are homotopy equivalences (resp. cofibrations, resp fibrations), so is~$f$.  
	\end{replemma}
	\begin{proof}
		If~$f'$ and~$f''$ are homotopy equivalences, we use \Cref{Buh3.1} to construct a diagram 
		\begin{center}
			\begin{tikzcd}
				a \arrow[r, "i_a",tail] \arrow[d,"f'"]\arrow[dr, phantom, "\square"]   & e \arrow[d,"g"]\arrow[r,two heads,"p_b"] & B\arrow[d,equal] \\
				c \arrow[r, "j",tail] \arrow[d,equal]   & z \arrow[d,"h"]\arrow[r,two heads,"q"]\arrow[dr, phantom, "\square"] & d\arrow[d,"f''"] \\
				c\arrow[r,tail,"i_c"]               & f \arrow[r,two heads,"p_d"] & d                                     
			\end{tikzcd}
		\end{center}
		with exact rows. Since pushouts and pullbacks of homotopy equivalences are homotopy equivalences, the maps~$g$ and~$h$ are homotopy equivalences, whence~$f=hg$ is.
		
		If~$f'$ and~$f''$ are cofibrations, then~$g$ is a cofibration since it is a pushout of~$f'$, and~$h$ is a cofibration as a direct consequence of \ref{Ex3}. Thus, the composite~$f=h\circ g$ is a cofibration.
		
		If~$f'$ and~$f''$ are fibrations, it is shown dually that~$f$ is a fibration.
	\end{proof}
	
	\begin{replemma}{lemma:3x3}
		Consider the commutative diagram with exact columns
		\begin{center}
			\begin{tikzcd}
				a' \arrow[d,tail,"i_a"]\arrow[r,"f'"] & e' \arrow[r,"g'"]\arrow[d,tail,"i_e"] & b'\arrow[d,tail,"i_b"] \\
				a \arrow[r,"f"]\arrow[d,two heads,"p_a"] & e \arrow[r,"g"]\arrow[d,two heads,"p_e"] & b\arrow[d,two heads,"p_b"] \\
				a''\arrow[r,"f''"]& e''\arrow[r,"g''"] & b''
			\end{tikzcd}
		\end{center}
		If the middle row and one of the other rows is exact, then the remaining row is exact.
	\end{replemma}
	\begin{proof}
		We assume that the top and middle rows are exact, and show that the bottom row is. The other case is dual.
		
		The map of exact sequences connecting the top and middle rows can be factored
		\begin{center}
			\begin{tikzcd}
				a' \arrow[r, "f'",tail] \arrow[d,"i_a",tail]\arrow[dr, phantom, "\square"]   & e' \arrow[d,"\overline{i_a}",tail]\arrow[r,two heads,"g'"] & b'\arrow[d,equal] \\
				a \arrow[r, "\overline{f}",tail] \arrow[d,equal]   & z \arrow[d,"\overline{i_b}",tail]\arrow[r,two heads,"\overline{g}"]\arrow[dr, phantom, "\square"] & b'\arrow[d,"i_b",tail] \\
				a \arrow[r,tail,"f"]               & e \arrow[r,two heads,"g"] & b                                    
			\end{tikzcd}
		\end{center}
		as asserted by \Cref{Buh3.1}. Our aim is to prove that~$f''$ is a cofibration and that the square
		\begin{center}
			\begin{tikzcd}
				a''\arrow[r,"f''"]\arrow[d,two heads] & e''\arrow[d,"g''"] \\
				0\arrow[r,tail] & b''
			\end{tikzcd}
		\end{center}
		is bicartesian. It suffices to show that the top square and outer rectangle in the diagram
		\begin{center}
			\begin{tikzcd}
				z\arrow[r,tail,"\overline{i_b}"]\arrow[d,"q",two heads] & e\arrow[d,"p_e",two heads] \\
				a''\arrow[r,"f''"]\arrow[d,two heads] & e''\arrow[d,"g''"] \\
				0\arrow[r,tail] & b''
			\end{tikzcd}
		\end{center}
		are bicartesian, where~$q$ is a cofiber of~$\overline{i_a}$ (by \Cref{Buh2.12}, the target of~$q$ can indeed be chosen to be~$a''$). Here, the outer rectangle is obtained by pasting two known bicartesian squares
		\begin{equation}\label{eq:3x3twosq}
		\begin{tikzcd}
		z\arrow[r,tail,"\overline{i_b}"]\arrow[d,"\overline{g}"]\arrow[dr, phantom, "\square"] & e\arrow[d,"g"] \\
		b'\arrow[r,"i_b"]\arrow[d,two heads]\arrow[dr, phantom, "\square"] & b\arrow[d,"p_b"] \\
		0\arrow[r,tail] & b''
		\end{tikzcd}
		\end{equation}
		and the upper square appears on the bottom right when applying \Cref{Buh3.1} to the map 
		\begin{center}
			\begin{tikzcd}
				a' \arrow[d, "i_a",tail] \arrow[r, "f'"] & e' \arrow[d, "i_e",tail] \\
				a \arrow[d, "p_a",two heads] \arrow[r, "f"]   & e \arrow[d, "p_e",two heads]  \\
				a'' \arrow[r, "f''"]                & e''                 
			\end{tikzcd}
		\end{center}
		of exact sequences. Thus, the top square and outer rectangle in \eqref{eq:3x3twosq} are bicartesian by construction, and the proof is complete.
	\end{proof}

	\bibliographystyle{plain}
	\bibliography{paper2}

\begin{thebibliography}{10}

\bibitem{Ada78}
John~F. Adams.
\newblock {\em Infinite loop spaces}.
\newblock Number~90. Princeton University Press, 1978.

\bibitem{AF20}
David Ayala and John Francis.
\newblock Fibrations of {$\infty$}-categories.
\newblock {\em High. Struct.}, 4(1):168--265, 2020.

\bibitem{Bar15}
Clark Barwick.
\newblock On exact $\infty$-categories and the {T}heorem of the {H}eart.
\newblock {\em Compositio Mathematica}, 151(11):2160--2186, 2015.

\bibitem{Bar16}
Clark Barwick.
\newblock On the algebraic {K}-theory of higher categories.
\newblock {\em Journal of Topology}, 9(1):245--347, 2016.

\bibitem{BR}
Clark Barwick and John Rognes.
\newblock On the {Q}-construction for exact infinity-categories.
\newblock {\em Preprint, available at
  https://www.maths.ed.ac.uk/~cbarwick/papers/qconstr.pdf}, 2010.

\bibitem{B-TS20}
Raphael Bennett-Tennenhaus and Amit Shah.
\newblock Transport of structure in higher homological algebra.
\newblock {\em Journal of Algebra}, 574:514--549, 2021.

\bibitem{BV73}
John~M. Boardman and Rainer~M. Vogt.
\newblock {\em Homotopy invariant algebraic structures on topological spaces},
  volume 347.
\newblock Springer, 2006.

\bibitem{Buh10}
Theo B{\"u}hler.
\newblock Exact categories.
\newblock {\em Expositiones Mathematicae}, 28(1):1--69, 2010.

\bibitem{Cis19}
Denis-Charles Cisinski.
\newblock {\em Higher categories and homotopical algebra}, volume 180.
\newblock Cambridge University Press, 2019.

\bibitem{DK80c}
William~G. Dwyer and Daniel~M. Kan.
\newblock Calculating simplicial localizations.
\newblock {\em Journal of Pure and Applied Algebra}, 18(1):17--35, 1980.

\bibitem{DK80}
William~G. Dwyer and Daniel~M. Kan.
\newblock Function complexes in homotopical algebra.
\newblock {\em Topology}, 19(4):427--440, 1980.

\bibitem{DK80s}
William~G. Dwyer and Daniel~M. Kan.
\newblock Simplicial localizations of categories.
\newblock {\em Journal of Pure and Applied Algebra}, 17(3):267--284, 1980.

\bibitem{GNP21}
Mikhail Gorsky, Hiroyuki Nakaoka, and Yann Palu.
\newblock Positive and negative extensions in extriangulated categories.
\newblock {\em preprint arXiv:2103.12482 [math.CT]}, 2021.

\bibitem{Hel58}
Alex Heller.
\newblock Homological algebra in abellian categories.
\newblock {\em Annals of Mathematics}, pages 484--525, 1958.

\bibitem{Joy02}
Andr{\'e} Joyal.
\newblock Quasi-categories and {Kan} complexes.
\newblock {\em Journal of Pure and Applied Algebra}, 175(1-3):207--222, 2002.

\bibitem{Kan57}
Daniel~M Kan.
\newblock On css complexes.
\newblock {\em American Journal of Mathematics}, 79(3):449--476, 1957.

\bibitem{Kel90}
Bernhard Keller.
\newblock Chain complexes and stable categories.
\newblock {\em Manuscripta mathematica}, 67(1):379--417, 1990.

\bibitem{Kle20}
Jona Klemenc.
\newblock The stable hull of an exact $\infty$-category.
\newblock {\em Homology, Homotopy and Applications}, 24(2):195--220, 2022.

\bibitem{Lur06}
Jacob Lurie.
\newblock Stable infinity categories.
\newblock {\em preprint arXiv:math/0608228 [math.CT]}, 2006.

\bibitem{Lur09}
Jacob Lurie.
\newblock {\em Higher topos theory (am-170)}.
\newblock Princeton University Press, 2009.

\bibitem{Lur17}
Jacob Lurie.
\newblock Higher algebra.
\newblock {\em Preprint, available at http://www. math. harvard. edu/\~{}
  lurie}, 2017.

\bibitem{Ker}
Jacob Lurie.
\newblock Kerodon.
\newblock \url{https://kerodon.net}, 2018.

\bibitem{Mac67}
Saunders MacLane.
\newblock {\em Homology}.
\newblock Springer Science \& Business Media, 2012.

\bibitem{NP19}
Hiroyuki Nakaoka and Yann Palu.
\newblock Extriangulated categories, {Hovey} twin cotorsion pairs and model
  structures.
\newblock {\em Cah. Topol. G{\'e}om. Diff{\'e}r. Cat{\'e}g}, 60(2):117--193,
  2019.

\bibitem{NP20}
Hiroyuki Nakaoka and Yann Palu.
\newblock External triangulation of the homotopy category of exact
  quasi-category.
\newblock {\em preprint arXiv:2004.02479 [math.CT]}, 2020.

\bibitem{Pos11}
Leonid~E Positselskii.
\newblock Mixed {A}rtin--{T}ate motives with finite coefficients.
\newblock {\em Moscow Mathematical Journal}, 11(2):317--402, 2011.

\bibitem{Qui73}
Daniel Quillen.
\newblock Higher algebraic {K}-theory: I.
\newblock In {\em Higher K-theories}, pages 85--147. Springer, 1973.

\bibitem{Ret86}
Vladimir~S. Retakh.
\newblock Homotopic properties of categories of extensions.
\newblock {\em Russian Mathematical Surveys}, 41(6):217, 1986.

\bibitem{Shu08}
Michael~A. Shulman.
\newblock Set theory for category theory.
\newblock {\em preprint arXiv:0810.1279 [math.CT]}, 2008.

\bibitem{Ver96}
Jean-Louis Verdier.
\newblock {\em Des cat{\'e}gories d{\'e}riv{\'e}es des cat{\'e}gories
  ab{\'e}liennes}.
\newblock Soci{\'e}t{\'e} math{\'e}matique de France, 1996.

\bibitem{Yon60}
Nobuo Yoneda.
\newblock On {Ext} and exact sequences.
\newblock {\em J. Fac. Sci. Univ. Tokyo Sect. I}, 8(507-576):1960, 1960.

\end{thebibliography}
	
\end{document}